\newtheorem{thm}{Theorem}[section]
\newtheorem{cor}[thm]{Corollary}
\newtheorem{lem}[thm]{Lemma}
\newtheorem{prop}[thm]{Proposition}
\newtheorem{theoremalpha}{Theorem}
\theoremstyle{definition}
\theoremstyle{remark}
\newtheorem{rem}[thm]{Remark}
\numberwithin{equation}{section}
\begin{document}
\title[Splitting method for nonlinear Schr\"odinger equation]
{Time splitting method for nonlinear Schr\"odinger equation with rough initial data in $L^2$}

\author[Choi, Kim and Koh]{Hyung Jun Choi, Seonghak Kim and Youngwoo Koh}

\address{School of Liberal Arts, Korea University of Technology and Education, Cheonan 31253, Republic of Korea}
\email{hjchoi@koreatech.ac.kr}

\address{Department of Mathematics, College of Natural Sciences, Kyungpook National University, Daegu 41566, Republic of Korea}
\email{shkim17@knu.ac.kr}

\address{Department of Mathematics Education, Kongju National University, Kongju 32588, Republic of Korea}
\email{ywkoh@kongju.ac.kr}

\subjclass[2010]{Primary 35Q55, 65M15.}
\keywords{Nonlinear Schr\"odinger equations, time splitting method}
\thanks{The authors equally contributed to this work}

\maketitle
\begin{abstract}
We establish convergence results related to the operator splitting scheme on the Cauchy problem for the nonlinear Schr\"odinger equation with rough initial data in $L^2$,
    $$
    \left\{
    \begin{array}{ll}
      i\partial_t u +\Delta u = \lambda |u|^{p} u, & (x,t) \in \mathbb{R}^d \times \mathbb{R}_+, \\
      u (x,0) =\phi (x), & x\in\mathbb{R}^d,
    \end{array}
    \right.
    $$
where $\lambda \in \{-1,1\}$ and $p >0$.
While the Lie approximation $Z_L$ is known to converge to the solution $u$ when the initial datum $\phi$ is sufficiently smooth,
the convergence result for rough initial data is open to question.
In this paper, for rough initial data $\phi\in L^2 (\mathbb{R}^d)$, we prove the $L^2$ convergence of the filtered Lie approximation $Z_{flt}$ to the solution $u$ in the mass-subcritical range, $0< p < \frac{4}{d}$.
Furthermore, we provide a precise convergence result for radial initial data $\phi\in L^2 (\mathbb{R}^d)$.
\end{abstract}

%%%%%%%%%%%%%%%%%%%%%%%%%%%%%%%%%%%%%%%%%%%%%%%%%%%%%%%%%%%%%%%%%%%%%%%%%%%%%%%%%%%%%%%%%%%%%

\section{Introduction}

Consider the Cauchy problem for the nonlinear Schr\"odinger equation in $\mathbb{R}^{d+1}$:
    \begin{equation}\label{main-equation}
    \left\{\begin{array}{ll}
    i\partial_t u + \Delta u = \lambda |u|^p u, & (x,t) \in \mathbb{R}^d \times \mathbb{R}_+, \\
    u(x,0)= \phi (x), & x\in\mathbb{R}^d,
    \end{array} \right.
    \end{equation}
where $d\in\mathbb{N}$, $\lambda \in \{-1,1\}$, $p>0$, and $\mathbb{R}_+ = (0,\infty)$.
The nonlinear Schr\"odinger equations appear in various models of quantum mechanics. Regarding history, one may refer to some literatures \cite{C,MN,SS,T}.

The well-posedness theory of \eqref{main-equation} in the Sobolev space $H^s (\mathbb{R}^d)$ is currently well understood.
We briefly summarize certain facts on the subcritical case of \eqref{main-equation}.
For $0 < p < \frac{4}{(d-2s)_{+}}$ with $s\geq 0$,
let us take an initial datum $\phi\in H^s (\mathbb{R}^d)$, where $(a)_{+}:=\max\{a,0\}$ for $a\in\mathbb{R}$.
Then there exist a maximal time $T_{\max}=T(d,p,\lambda,s)>0$ and a unique solution $u \in C\left([0,T_{\max}); H^s (\mathbb{R}^d)\right)$ to \eqref{main-equation} satisfying
    \begin{equation}\label{NLS-result}
    \sup_{0 \leq t \leq T} \|u(t)\|_{H^s (\mathbb{R}^d)} + \|u\|_{L^q([0,T]; W^{s,r} (\mathbb{R}^d))} \leq C(d,p,T,\phi)
    \end{equation}
for all $0<T<T_{\max}$ and Schr\"odinger admissible pairs $(q,r)$.
Here, a pair $(q,r)$ is called \emph{(Schr\"odinger) admissible} if
    \begin{equation}\label{admissible}
    \frac{2}{q} + \frac{d}{r} = \frac{d}{2}, \quad 2\leq q,r \leq \infty, \quad \mbox{and}\quad (q,r,d)\neq(2,\infty,2).
    \end{equation}
Furthermore, we have $T_{\max}=\infty$ if $0 < p < \frac{4}{d}$ (called \textit{mass-subcritical}) or $\lambda=1$ (called \textit{defocusing}).
For details, we refer the reader to Sections 4 and 5 of Cazenave \cite{C}.\\

In this paper, we are concerned with operator splitting schemes, which are widely used in the numerical computation of semilinear equations such as in \eqref{main-equation}. The basic concept of the schemes is to divide the problem \eqref{main-equation} into linear and nonlinear flows and then to apply the two flows to the previous data in a fixed order.
We set $S(t) \phi$ as the solution of the linear Schr\"odinger propagation:
    $$
    \left\{ \begin{aligned}  i\partial_t u & = -\Delta u, & (x,t) \in \mathbb{R}^d \times \mathbb{R}_+,
    \\
    u(x,0)&= \phi (x),& x \in \mathbb{R}^d,
    \end{aligned}
    \right.
    $$
which admits the Fourier multiplier formula $S(t) \phi = e^{it \Delta} \phi$.
Also, we define $N(t) \phi$ to be the solution of the flow:
    $$
    \left\{ \begin{aligned}  i\partial_t u & = \lambda |u|^p u,& (x,t) \in \mathbb{R}^d \times \mathbb{R}_+,
    \\
    u(x,0)&= \phi (x),& x \in \mathbb{R}^d.
    \end{aligned}
    \right.
    $$
Thus, we have $N(t) \phi = \exp (-i t \lambda |\phi|^p) \phi$.
We now split the flow of \eqref{main-equation} into nonliear flow $N(t)$ and linear flow $S(t)$ with a small switching time. Specifically, for a fixed time interval $[0,T]$ and small $\tau>0$, we consider an approximation $Z(n\tau)$, based on an $n$-time iteration of $N(\tau)$ and $S(\tau)$.
For the combination of the two flows, there are two popular approximations, namely, the Lie approximation $Z_L$,
    $$
    Z_L(n \tau) \phi = \big( S(\tau) N(\tau) \big)^n \phi,\quad 0 \leq n \tau \leq T ,
    $$
and the Strang approximation $Z_S$,
    $$
    Z_S(n \tau) \phi = \big( N(\tau/2) S(\tau) N(\tau/2) \big)^n \phi,\quad 0 \leq n \tau \leq T.
    $$
\

In 2008, Lubich \cite{L} first established the $L^2$ convergence of the Strang approximation $Z_S$ to the solution $u$ of \eqref{main-equation} for the cubic nonlinear Schr\"odinger equation in $\mathbb{R}^{3}$.
More precisely, for $d=3$, $p=2$, $\phi \in H^4 (\mathbb{R}^3)$, and $T<T_{\max}$,
the Strang approximation $Z_S$ satisfies
    \begin{equation}\label{Lubich-result}
    \max_{0 \leq n \tau \leq T} \|Z_S(n \tau)\phi - u (n\tau)\|_{L^2 (\mathbb{R}^3)}
    \leq C(T, \phi)\tau^{2}  .
    \end{equation}
Subsequently, for $1\leq d \leq3$ and $p=2$, Eilinghoff-Schaubelt-Schratz \cite{ESS} extended the result to the wider class of initial data, namely, $\phi \in H^{2+2\epsilon}(\mathbb{R}^d)$ $(\epsilon >0)$,
with convergence rate $\tau^{1+\epsilon}$ in the right hand side of \eqref{Lubich-result}.

Regarding the Lie approximation $Z_L$, since the nonlinear flow $N(t)$ satisfies
    $$
    N(\tau)\phi =N(\tau/2)N(\tau/2)\phi
    $$
in the case of the nonlinear Schr\"odinger equation \eqref{main-equation},
the error estimate of the Lie approximation $Z_L$ is essentially similar to that of the Strang approximation $Z_S$.
Eilinghoff-Schaubelt-Schratz \cite{ESS} recently obtained
the $L^2$ convergence of the Lie approximation $Z_L$ for initial data $\phi \in H^{2} (\mathbb{R}^d)$.
(Earlier, Besse-Bid\'egaray-Descombes \cite{BBD} presented a convergence result in $d=2$ for a general Lipschitz nonlinearity.)
Precisely, for $1\leq d \leq3$, $p=2$, $\phi \in H^{2} (\mathbb{R}^d)$ and $T<T_{\max}$,
they showed that
    $$
    \max_{0 \leq n \tau \leq T} \|Z_L(n \tau)\phi - u (n\tau)\|_{L^2 (\mathbb{R}^d)} \leq   C(T,\phi)\tau.
    $$
Our main objective is to investigate the convergence of the approximations to the solution $u$ of \eqref{main-equation} for a wide class of rough initial data.

The problem of reducing the regularity of initial data is challenging.
To overcome this difficulty, various modified versions of the splitting scheme have been investigated.
Ignat \cite{I2} considered a filtered Lie approximation $Z_{flt}$,
    \begin{equation}\label{flt-Lie_approx}
    Z_{flt} (n\tau)\phi = \big( P(\tau)S(\tau) N(\tau) \big)^n P(\tau) \phi,
    \end{equation}
where $P(\tau)$ denotes the frequency localized multiplier, given by
    \begin{equation}\label{freq-loc-oper}
    \widehat{P(\tau) \phi} (\xi)
    = \chi (\tau^{\frac{1}{2}}\xi) \widehat{\phi}(\xi) ,\quad \xi \in \mathbb{R}^d.
    \end{equation}
Here, $\chi \in C^{\infty}(\mathbb{R}^d)$ is a cut-off function, supported in $B(0,2)$, such that $\chi \equiv 1$ in $B(0,1)$.
For $1\leq d\leq3$, $1\leq p < \frac{4}{d}$, $\phi \in H^2 (\mathbb{R}^d)$, and $T<\infty$,
Ignat \cite{I2} proved the $L^2$ convergence as
    \begin{equation}\label{Ignet-result}
    \max_{0 \leq n \tau \leq T} \| Z_{flt} (n \tau)\phi - u (n\tau)\|_{L^2 (\mathbb{R}^d)}
    \leq  C(d,p,T,\phi)\tau.
    \end{equation}
Later, this result was extended by Choi-Koh \cite{CK} to all $H^1$ \textit{energy-subcritical} cases.
Precisely, for $1\leq d \leq 5$, $0< p < \frac{4}{(d-2)_{+}}$, $\phi \in H^1 (\mathbb{R}^d)$, and $T<T_{\max}$,
the estimate \eqref{Ignet-result} holds with the convergence rate $\tau$ replaced by $\tau^{\frac{1}{2}}$.
Furthermore, for $d=1$, $p=2$, $0<s<1$, and $T<\infty$, Ostermann-Rousset-Schratz \cite{ORS} showed that
    \begin{equation}\label{ORS-result}
    \max_{0 \leq n \tau \leq T} \| Z_{flt} (n \tau)\phi - u (n\tau)\|_{L^2 (\mathbb{R})}
    \leq C(d,p,T,\phi)\tau^{\frac{s}{2}}
    \end{equation}
for all $\phi\in H^{s+\epsilon}(\mathbb{R})$ with arbitrary $\epsilon>0$.

As we can see in \eqref{Lubich-result}, \eqref{Ignet-result}, and \eqref{ORS-result},
for initial data $\phi \in H^s (\mathbb{R}^d)$, the order $\frac{s}{2}$ of convergence rate  might be understood as the natural order barrier for the Strang and Lie splitting schemes.
However, Ostermann-Rousset-Schratz \cite{ORS2} broke the natural order barrier $\frac{s}{2}$ using a modified approximation $Z_{ORS}$ that is based on the Lie approximation,
    $$
    Z_{ORS}((n+1)\tau)
    = S(\tau) \bigg( Z_{ORS}(n\tau) - i\tau P(\widetilde{\tau}) \big( P(\widetilde{\tau}) Z_{ORS}(n\tau) \big)^2 \frac{S(-2\tau)-1}{-2i\tau\Delta} P(\widetilde{\tau}) \overline{Z_{ORS}(n\tau)} \bigg) P(\widetilde{\tau}),
    $$
where $\widetilde{\tau} =\tau^{C(d)}$ for some $C(d) >0$.
Precisely, for $1\leq d \leq3$, $p=2$, $\phi \in H^1 (\mathbb{R}^d)$, and $T<T_{\max}$,
$Z_{ORS}(n\tau)\phi$ converges to the solution $u$ with convergence rate of order $\frac{1}{2}+\frac{5-d}{12}$.
Later, subsequent works \cite{LW,OY,WY,W,ORS3} also broke the natural order barrier $\frac{s}{2}$ in their different settings.
Recently, for the rough initial data $\phi \in H^{s} (\mathbb{T})$ with $0<s<1$, Wu \cite{W} obtained the $L^2$ convergence of the modified Strang approximation
    $$
    Z_W((n+1)\tau)= S(\tau/2) \widetilde{N}(\tau) S(\tau/2) \Big( P(\widetilde{\tau})+ e^{-2\pi i\lambda M_0 \tau}(1-P(\widetilde{\tau}) \Big) Z_W(n\tau)
    $$
where $\widetilde{N}(\tau) \phi = \exp (-i t \lambda |P(\widetilde{\tau})\phi|^2) \phi$, $\widetilde{\tau} =\tau^{\frac{2}{4+s}}$ and $M_0$ the mass of the initial data.
Precisely, for $d=1$, $p=2$ and $\phi \in H^{s} (\mathbb{T})$ with $0<s<1$,
$Z_{W}(n\tau)\phi$ converges to the solution $u$ with convergence rate of order $\frac{4s}{4+s}$
which obtained by the analysis separating high-low frequency components.

\

In this paper, we consider the convergence of the filtered Lie approximation $Z_{flt}$ for rough initial data $\phi\in L^2(\mathbb{R}^d)$.
Previous approaches result in the estimate,
    \begin{equation}\label{fail_L2}
    \max_{0 \leq n \tau \leq T} \|Z_{flt}(n \tau)\phi - u (n\tau)\|_{L^2 (\mathbb{R}^d)} \leq C(T,\phi),
    \end{equation}
that fails to give a convergence of $Z_{flt}$ for $\phi \in L^2 (\mathbb{R}^d)$.
Thus, to deal with rough initial data in $L^2 (\mathbb{R}^d)$,
we need a more careful analysis than the previous methods.

The first result of this paper pertains to the convergence of the filtered Lie approximation $Z_{flt}$ for initial data in $L^2 (\mathbb{R}^d)$.

\begin{thm}\label{main-thm}
Let $1\leq d\leq3$ and $0< p < \frac{4}{d}$. Then, for any $\phi \in L^2 (\mathbb{R}^d)$ and $T>0$,
    \begin{equation}\label{main-result}
    \max_{0 \leq n \tau \leq T} \big\| Z_{flt} (n \tau)\phi - u (n\tau) \big\|_{L^2 (\mathbb{R}^d)} \rightarrow 0 \quad\mbox{as}\quad \tau \rightarrow 0 .
    \end{equation}
\end{thm}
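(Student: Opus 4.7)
The plan is a density argument combined with a $\tau$-uniform $L^2$-stability estimate for the filtered Lie scheme. Fix $\phi\in L^2(\mathbb{R}^d)$ and pick $\phi_m\in H^2(\mathbb{R}^d)$ with $\phi_m\to\phi$ in $L^2$; let $u_m$ denote the solution of \eqref{main-equation} with datum $\phi_m$. For $0\le n\tau\le T$ the triangle inequality gives
\[
\|Z_{flt}(n\tau)\phi-u(n\tau)\|_{L^2}\;\le\;\mathrm{I}_m(n\tau)+\mathrm{II}_m(n\tau)+\mathrm{III}_m(n\tau),
\]
where $\mathrm{I}_m=\|Z_{flt}(n\tau)\phi-Z_{flt}(n\tau)\phi_m\|_{L^2}$, $\mathrm{II}_m=\|Z_{flt}(n\tau)\phi_m-u_m(n\tau)\|_{L^2}$, and $\mathrm{III}_m=\|u_m(n\tau)-u(n\tau)\|_{L^2}$.

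The two outer terms are controlled by existing tools. In the mass-subcritical regime $0<p<4/d$, the Cazenave--Weissler contraction on the admissible pair $(q,r)=\bigl(\tfrac{4(p+2)}{dp},\,p+2\bigr)$ gives global $L^2$-well-posedness and continuous dependence of the data-to-solution map $L^2\to C([0,T];L^2)$, so $\sup_n\mathrm{III}_m(n\tau)\to 0$ as $m\to\infty$ uniformly in $\tau$. For each fixed $m$, since $\phi_m\in H^2$, the $H^2$-convergence result \eqref{Ignet-result} of Ignat yields $\max_n\mathrm{II}_m(n\tau)\le C(T,\phi_m)\tau\to 0$ as $\tau\to 0$.

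The main obstacle is $\mathrm{I}_m$: I must show that $\phi\mapsto Z_{flt}(n\tau)\phi$ is Lipschitz on $L^2$-bounded sets with a Lipschitz constant uniform in $\tau>0$ and in $n\le T/\tau$. I would do this by transplanting the Cazenave--Weissler fixed-point argument to the discrete setting. The key technical ingredient is a discrete Strichartz inequality
\[
\tau^{1/q}\bigl\|(P(\tau)S(\tau))^n F\bigr\|_{\ell^q_n L^r_x}\;\lesssim\;\|F\|_{L^2},
\]
together with its inhomogeneous analogue, valid for every admissible pair $(q,r)$ uniformly in $\tau$. Because $P(\tau)$ and $S(\tau)$ are commuting Fourier multipliers --- so $(P(\tau)S(\tau))^n=P(\tau)^n S(n\tau)$, and $\chi^n$ remains a smooth cut-off supported in $B(0,2)$ with $\chi^n\equiv 1$ on $B(0,1)$ --- this reduces to the continuous Strichartz estimate for $S(t)P(\tau)F$ via a Riemann-sum comparison on the slabs $[n\tau,(n+1)\tau)$, the error factor $S(t-n\tau)-I$ being absorbed using the frequency cut-off $\tau^{1/2}|\xi|\le 2$. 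Now Taylor expanding $N(\tau)\psi=\psi-i\tau\lambda|\psi|^p\psi+O(\tau^2|\psi|^{2p+1})$ and iterating $Z_{flt}((n+1)\tau)=P(\tau)S(\tau)N(\tau)Z_{flt}(n\tau)$ produces a discrete Duhamel representation whose principal part is
\[
-i\lambda\tau\sum_{k=0}^{n-1}(P(\tau)S(\tau))^{n-k}\bigl(|Z_{flt}(k\tau)\phi|^p Z_{flt}(k\tau)\phi\bigr).
\]
Choosing $(q,r)=\bigl(4(p+2)/(dp),\,p+2\bigr)$ --- for which $q>p+2$ precisely when $p<4/d$, providing the subcritical exponent gap needed to close the contraction --- and combining the discrete Strichartz bounds with the pointwise inequality $\bigl||a|^pa-|b|^pb\bigr|\lesssim(|a|^p+|b|^p)|a-b|$ and H\"older in space-time gives a closed contraction on a short time window $[0,T_0]$ with $T_0=T_0(\|\phi\|_{L^2})$. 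This yields both a $\tau$-uniform a priori bound $\|Z_{flt}(k\tau)\phi\|_{\tau^{1/q}\ell^q_k L^r_x}\lesssim 1$ and the desired $L^2$-Lipschitz bound on $\mathrm{I}_m$ over $[0,T_0]$. Since $|N(\tau)\psi|=|\psi|$ pointwise and $P(\tau),S(\tau)$ are $L^2$-contractions, we have $\|Z_{flt}(n\tau)\phi\|_{L^2}\le\|\phi\|_{L^2}$, so the short-time argument iterates finitely many times to cover $[0,T]$. Sending first $\tau\to 0$ and then $m\to\infty$ completes the proof.
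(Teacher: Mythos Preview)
Your density argument is correct and constitutes a genuinely different route from the paper's proof. The paper does \emph{not} pass through smooth data: instead it develops a direct local error estimate (Proposition~\ref{local_main_thm}) that compares $Z_{flt}(n\tau)\phi$ with $P(\tau)u(n\tau)$ for the rough datum $\phi$ itself. That proposition introduces an auxiliary frequency scale $\widetilde\tau\in[\tau,1)$, splits the difference into five pieces $\mathcal A_1$--$\mathcal A_5$, and the substantial work is the quadrature term $\mathcal A_4$ (difference between the discrete and continuous Duhamel sums), handled via $(i\partial_t+\Delta)(|P(\widetilde\tau)u|^pP(\widetilde\tau)u)$ and Bernstein. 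The globalization in Section~\ref{sec_global} then iterates Proposition~\ref{local_main_thm} together with the discrete Lipschitz bound (Lemma~\ref{Z1-Z2_thm}) and concludes by dominated convergence on the residual terms. Your Term~I is exactly an iteration of Lemma~\ref{Z1-Z2_thm} alone, which is valid since $\|Z_{flt}(n\tau)\psi\|_{L^2}\le\|\psi\|_{L^2}$ lets the local Lipschitz constant propagate with exponential growth in $T/T_{loc}$; your Terms~II and~III then offload the remaining work to known results.

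What each approach buys: yours is shorter and modular, but is purely qualitative --- the constant hidden in Term~II is $C(T,\phi_m)$ and blows up as $\phi_m\to\phi$ in $L^2$, so no rate survives. The paper's direct estimate produces an explicit bound (the quantity $\mathcal B_1$ in \eqref{final_bdd7}) in terms of $\|(1-P(\widetilde\tau))u\|$, $\|(1-P(\widetilde\tau))(|u|^pu)\|$ and $(\tau/\widetilde\tau)^{1/2}$; this is precisely the machinery reused in Section~\ref{sec_radial} to obtain the quantitative rate of Theorem~\ref{main-thm3}, which your argument cannot reach.

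Two small points. First, Ignat's result \eqref{Ignet-result} that you cite for Term~II requires $p\ge 1$; for $0<p<1$ you should invoke the $H^1$ result of Choi--Koh \cite{CK} instead (rate $\tau^{1/2}$, which suffices). Second, for the Lipschitz bound in Term~I there is no need to Taylor-expand $N(\tau)$: keep the exact discrete Duhamel with the operator $(N(\tau)-I)/\tau$ and use the pointwise estimate \eqref{MVT-1}, which already has the form $c_p|v-w|(|v|^p+|w|^p)$. This avoids having to control the $O(\tau^2|\psi|^{2p+1})$ remainder separately.
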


To derive \eqref{main-result},
we obtain some precise local-in-time estimates in Section \ref{sec_local},
which are the main contribution of the paper.
We then complete the proof of Theorem \ref{main-thm} in Section \ref{sec_global}.
The condition that $p < \frac{4}{d}$ in Theorem \ref{main-thm} follows naturally from the \textit{mass conservation law}.

\begin{rem}
Theorem \ref{main-thm} can be naturally extended to higher dimensions $d\geq 4$ with a technical condition $p\leq \frac{2}{d-2}$. This condition $p\leq \frac{2}{d-2}$ might be removable, but we focus on only physical dimensions $1\leq d\leq 3$ in this paper.
\end{rem}

Additionally, we consider the case of radial initial data in $L^2(\mathbb{R}^d)$.
In this case,
we can make use of several extended results of the Strichartz estimates that enable us to obtain a tighter bound than \eqref{main-result} as follows.

\begin{thm}\label{main-thm3}
Let $2\leq d\leq3$ and $0< p < \frac{4}{d}$. Then, for any radial function $\phi \in L^2 (\mathbb{R}^d)$ and $T>0$,
    \begin{equation}\label{main-result5}
    \begin{aligned}
    &\max_{0 \leq n \tau \leq T} \big\| Z_{flt}(n \tau)\phi - u (n\tau) \big\|_{L^2 (\mathbb{R}^d)} \\
    &\quad\leq C_{d,p} \exp\big( C_{d,p}T \| \phi\|_{L^2(\mathbb{R}^d)}^{\frac{4p}{4-dp}} \big) \\
    &\qquad \times\Big( \| \phi - P(\widetilde{\tau})\phi \|_{L^2(\mathbb{R}^d)}
    + \widetilde{\tau}^{\frac{4-dp}{8}} T \| \phi \|_{L^2(\mathbb{R}^d)}^{C_{d,p}}  + \Big(\frac{\tau}{\widetilde{\tau}}\Big)^{\frac{1}{2}} \big( \|\phi\|_{L^2(\mathbb{R}^d)} +\|\phi\|_{L^2(\mathbb{R}^d)}^{p+1}\big) \Big)
    \end{aligned}
    \end{equation}
for any $0<\tau\leq \widetilde{\tau}<1$.
\end{thm}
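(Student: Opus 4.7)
The plan is to decompose the total error against an intermediate NLS solution with mollified initial data and then iterate a sharp one-step estimate that exploits radial Strichartz bounds. Let $\widetilde{u}(t)$ denote the exact solution of \eqref{main-equation} with initial datum $P(\widetilde{\tau})\phi$; then
\[
Z_{flt}(n\tau)\phi - u(n\tau) = \bigl[u(n\tau) - \widetilde{u}(n\tau)\bigr] + \bigl[\widetilde{u}(n\tau) - Z_{flt}(n\tau)\phi\bigr].
\]
The first bracket is controlled by the $L^2$ well-posedness theory for mass-subcritical NLS: Duhamel, the standard Strichartz estimates applied with the admissible pair $(q,r) = (4(p+2)/(dp),\, p+2)$, and mass conservation produce, via a bootstrap, the bound
\[
\|u - \widetilde{u}\|_{L^\infty_t L^2_x([0,T])} \leq C\exp\bigl(C_{d,p}T\|\phi\|_{L^2}^{4p/(4-dp)}\bigr)\,\|\phi - P(\widetilde{\tau})\phi\|_{L^2},
\]
which contributes both the first term of \eqref{main-result5} and the overall exponential prefactor.

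For the second bracket, set $v_n := Z_{flt}(n\tau)\phi$. The telescoping identity
\[
\widetilde{u}(n\tau) - v_n = \sum_{k=0}^{n-1} \Phi\bigl((n-1-k)\tau\bigr)\,\bigl[\Phi(\tau) - P(\widetilde{\tau})S(\tau)N(\tau)\bigr] v_k,
\]
with $\Phi$ the exact NLS flow, reduces the task to (i) a one-step local truncation estimate on $\mathcal{E}(v) := [\Phi(\tau) - P(\widetilde{\tau})S(\tau)N(\tau)]v$, and (ii) $L^2$-Lipschitz stability of $\Phi$ on data of mass $\lesssim \|\phi\|_{L^2}$, which again costs only the same exponential prefactor. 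Duhamel expansion of $\Phi(\tau)v$ around $S(\tau)N(\tau)v$ decomposes $\mathcal{E}(v)$ as the sum of a high-frequency piece $(I-P(\widetilde{\tau}))S(\tau)N(\tau)v$ and a nonlinear remainder. The high-frequency piece is bounded by Bernstein's inequality at scale $\widetilde{\tau}^{-1/2}$; for the nonlinear remainder I invoke the \emph{radial Strichartz estimates} of Guo--Wang and Shao, which in dimensions $2\leq d\leq 3$ extend the admissible range in \eqref{admissible} to pairs on the line $2/q + (2d-1)/r = (2d-1)/2$. Applied to $P(\widetilde{\tau})S(t)v$ together with a fractional Bernstein gain against the cutoff radius $\widetilde{\tau}^{-1/2}$, this converts the $L^2$ hypothesis into the per-step gains $\widetilde{\tau}^{(4-dp)/8}\|\phi\|_{L^2}^{C_{d,p}}$ and $(\tau/\widetilde{\tau})^{1/2}(\|\phi\|_{L^2} + \|\phi\|_{L^2}^{p+1})$. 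Summing over $0\leq n\tau\leq T$ via a discrete Gronwall argument, and exploiting $\|v_n\|_{L^2}\leq \|\phi\|_{L^2}$ (each of $P(\widetilde{\tau})$, $N(\tau)$, $S(\tau)$ is an $L^2$-contraction/isometry), closes the second and third terms of \eqref{main-result5}.

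The main obstacle is the local truncation estimate: for non-even $p$ the nonlinearity $|u|^p u$ is only H\"older, so one cannot Taylor-expand $\Phi(\tau)$ naively; instead the leading-order terms of $\Phi(\tau)v$ and $P(\widetilde{\tau})S(\tau)N(\tau)v$ must be cancelled by hand, and the commutator-type remainders estimated. The delicate point is to choose radial Strichartz admissible pairs whose Bernstein exponents balance precisely to $(4-dp)/8$ on the filtering side and $1/2$ on the splitting side; this bookkeeping, together with the lossless nature of radial Strichartz, is what forces the hypotheses $2\leq d\leq 3$ and radiality.
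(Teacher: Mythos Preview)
Your decomposition via the mollified solution $\widetilde{u}$ and Lady Windermere's fan is a natural idea, but it differs substantially from the paper's argument and, as written, has a real gap in the local truncation step.

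The paper does \emph{not} iterate step by step. Instead it works in discrete Strichartz norms $\ell^{q}(I_0;L^r)$ over local intervals $I_0=[0,T_{loc}]$ with $T_{loc}\sim\|\phi\|_{L^2}^{-4p/(4-dp)}$. The difference $Z_{flt}(n\tau)\phi-P(\tau)u(n\tau)$ is expanded via the Duhamel representations of both objects into five pieces $\mathcal{A}_1,\dots,\mathcal{A}_5$; the auxiliary scale $\widetilde\tau$ is inserted as an intermediate projector \emph{inside} these pieces (not as a modified initial datum), and the quadrature term $\mathcal{A}_4$ is the source of the $(\tau/\widetilde\tau)^{1/2}$ factor. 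Radial Strichartz enters only to bound $\|(1-P(\widetilde\tau))u\|_{L^{q_0}(I_0;L^{r_0})}$ and its discrete analogue by $\|\phi-P(\widetilde\tau)\phi\|_{L^2}+\widetilde\tau^{(4-dp)/8}\|\phi\|_{L^2}^{C}$, using the enlarged inhomogeneous range $(q_2,r_0)$; it is a high-frequency decay statement for the \emph{continuous} solution over $I_0$, not a one-step bound. Globalisation then proceeds by iterating $\sim T/T_{loc}$ such intervals, using stability of $Z_{flt}$ (not of $\Phi$).

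The gap in your scheme is the claimed per-step truncation estimate. With $v_k=Z_{flt}(k\tau)\phi$ one has only $\|v_k\|_{L^2}\le\|\phi\|_{L^2}$ and frequency localisation at scale $\tau^{-1/2}$ (from $P(\tau)$, not $P(\widetilde\tau)$ --- your telescoping formula has the wrong projector). On a single interval of length $\tau$ the Strichartz/radial-Strichartz estimates give no smallness, and the standard Lie local error $\Phi(\tau)v_k-S(\tau)N(\tau)v_k$ is governed by $\tau^2$ times quantities like $\|\Delta(|v_k|^p v_k)\|_{L^2}$, which for data at frequency $\tau^{-1/2}$ costs a full $\tau^{-1}$; the resulting $O(\tau)$ one-step error, summed over $T/\tau$ steps, does not vanish. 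There is no mechanism in your outline that produces a genuine one-step $L^2$ bound of order $\tau\cdot(\tau/\widetilde\tau)^{1/2}$ or $\tau\cdot\widetilde\tau^{(4-dp)/8}$: the parameter $\widetilde\tau$ never touches $v_k$, and ``Bernstein against the cutoff radius $\widetilde\tau^{-1/2}$'' is not available for data localised at the finer scale $\tau^{-1/2}$. This is precisely why the paper abandons the per-step viewpoint and closes the estimate in $\ell^{q_0}(I_0;L^{r_0})$, where the dispersive smoothing over the full local interval can be exploited and the $\mathcal{A}_1$ contribution absorbed by choosing $T_{loc}$ small.
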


If we take $\widetilde{\tau} =\tau^{1-\epsilon}$ $(0<\epsilon<1)$,
the right hand side of \eqref{main-result5} trivially converges to zero as $\tau\rightarrow0$.
Furthermore, the bound \eqref{main-result5} is written as an explicit expression in terms of $\tau$, $\widetilde{\tau}$, $T$, and $\phi$.
Thus, we can control the convergence estimate by the $L^2$ norm of $\phi$,  support of its Fourier transform (which relies on the oscillation of $\phi$), and time $T$.
We will give a proof of Theorem \ref{main-thm3} in Section \ref{sec_radial}.

For instance, consider the radial function $\phi_\alpha$, given by
    \begin{equation}\label{radial_exam}
    \widehat{\phi_\alpha}(\xi) = C(1+|\xi|)^{-\frac{d}{2}} \big(\log(2+|\xi|) \big)^{-\frac{1+\alpha}{2}}
    \quad\mbox{for all}\; \xi\in \mathbb{R}^d,
    \end{equation}
where $\alpha >0$.
The function $\phi_\alpha$ is a typical one that belongs to $L^2(\mathbb{R}^d)\setminus H^s(\mathbb{R}^d)$ for any $s>0$.
We can easily calculate
    $$
    \| \phi_\alpha - P(\widetilde{\tau})\phi_\alpha \|_{L^2(\mathbb{R}^d)} = C(-\log\widetilde{\tau} )^{-\frac{\alpha}{2}}
    \quad\mbox{and}\quad \| \phi_\alpha \|_{L^2(\mathbb{R}^d)}\leq C.
    $$
Taking $\widetilde{\tau}=\tau (-\log \tau)^{\alpha}$ in \eqref{main-result5},
we obtain a bound of logarithmic order,
    $$
    \begin{aligned}
    \max_{0 \leq n \tau \leq T} \big\| Z_{flt}(n \tau) \phi_\alpha - u (n\tau) \big\|_{L^2 (\mathbb{R}^d)}
    \leq C(d,p,T)\Big( \frac{-1}{\log \tau} \Big)^{\frac{\alpha}{2}-\epsilon}
    \end{aligned}
    $$
for any $0<\tau<1$ and $\epsilon>0$. We record this result in a slightly more general fashion as follows.

\begin{cor}\label{cor_result}
Let $1\leq d\leq3$ and $0< p < \frac{4}{d}$. Then, for any $T>0$ and radial function $\phi \in L^2 (\mathbb{R}^d)$ with a decay of its Fourier transform,
    $$
    |\widehat{\phi}(\xi)| \leq C|\xi|^{-\frac{d}{2}} \big(\log|\xi| \big)^{-\frac{1+\alpha}{2}}
    \quad\mbox{if}\quad |\xi|\gg 1,
    $$
for some $\alpha >0$, the filtered Lie approximation $Z_{flt}$ satisfies
    $$
    \begin{aligned}
    \max_{0 \leq n \tau \leq T} \big\| Z_{flt}(n \tau)\phi - u (n\tau) \big\|_{L^2 (\mathbb{R}^d)}
    \leq C(d,p,T,\phi)\Big( \frac{-1}{\log\tau} \Big)^{\frac{\alpha}{2}-\epsilon}
    \end{aligned}
    $$
for any $0<\tau<1$ and $\epsilon>0$.
\end{cor}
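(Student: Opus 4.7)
The plan is to apply Theorem \ref{main-thm3} after a sharp tail estimate on the filtered initial datum, then balance the three error contributions in \eqref{main-result5} via a judicious choice of $\widetilde{\tau}$.

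First, I would quantify the loss from low-frequency filtering. Since $\chi\equiv 1$ on $B(0,1)$, the function $\phi - P(\widetilde{\tau})\phi$ has Fourier support contained in $\{|\xi|\geq \widetilde{\tau}^{-1/2}\}$. Invoking Plancherel, the hypothesized logarithmic decay of $\widehat{\phi}$, and passing to polar coordinates, the angular factor $r^{d-1}$ cancels the $r^{-d}$ from the bound, yielding
$$
\| \phi - P(\widetilde{\tau})\phi \|_{L^2(\mathbb{R}^d)}^2
\leq C \int_{\widetilde{\tau}^{-1/2}}^{\infty} \frac{dr}{r \, (\log r)^{1+\alpha}}
\leq C (-\log\widetilde{\tau})^{-\alpha}.
$$

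Next, I would set $\widetilde{\tau} = \tau(-\log\tau)^{\alpha}$ and substitute into \eqref{main-result5}. For small $\tau$, $-\log\widetilde{\tau} = -\log\tau - \alpha \log(-\log\tau) \sim -\log\tau$, so the tail term is of order $(-\log\tau)^{-\alpha/2}$; the Strichartz-type remainder $\widetilde{\tau}^{(4-dp)/8}$ is polynomial in $\tau$, hence negligible against any inverse logarithmic power; and the temporal-consistency term $(\tau/\widetilde{\tau})^{1/2} = (-\log\tau)^{-\alpha/2}$ matches the tail term in scale. An arbitrarily small loss $\epsilon>0$ in the exponent of the final bound is inserted to absorb the $\log(-\log\tau)$ factors generated by the substitution, which is precisely why the statement carries the parameter $\epsilon$.

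For $d\in\{2,3\}$ the argument is a direct invocation of Theorem \ref{main-thm3}. The main difficulty I anticipate is the dimension $d=1$, which is not covered by Theorem \ref{main-thm3} as stated, since the radial Strichartz improvements used in the $d\geq 2$ case are unavailable (``radial'' in one dimension merely means even, and this condition does not confer extra integrability). I would circumvent this by re-running the local-in-time argument of Section \ref{sec_radial} with the standard one-dimensional Strichartz pairs, which in the $d=1$ setting are already sufficiently rich to reproduce the structure of \eqref{main-result5}; once the analogous bound is in hand, the balancing step above applies verbatim.
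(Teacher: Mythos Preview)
Your argument for $d\in\{2,3\}$ is correct and coincides with the paper's own derivation: the paper does not give a separate formal proof of the corollary but obtains it exactly as you do, by inserting the tail bound $\|\phi-P(\widetilde\tau)\phi\|_{L^2}\le C(-\log\widetilde\tau)^{-\alpha/2}$ into \eqref{main-result5} and choosing $\widetilde\tau=\tau(-\log\tau)^{\alpha}$.

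The gap is in your treatment of $d=1$. You are right that Theorem~\ref{main-thm3} is stated only for $d\ge 2$, and the paper offers no separate argument for $d=1$; in fact the dimension range in the corollary appears inconsistent with the theorem it rests on. But your proposed fix --- rerunning Section~\ref{sec_radial} with the standard one-dimensional Strichartz pairs --- does not go through. The crucial gain $\widetilde\tau^{\epsilon_0}$ in \eqref{Str_inhomo-rad2}--\eqref{Str_inhomo-rad2'} comes from evaluating the inhomogeneous estimate with the pair $(q_2,r_0)$ satisfying $\tfrac{2}{q_2}+\tfrac{d}{r_0}>\tfrac{d}{2}$, i.e.\ strictly on the supercritical side of the admissible line; this is exactly what the radial improvement \eqref{radial_pair} buys for $d\ge 2$. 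In $d=1$ ``radial'' means even, which gives no enlargement of the Strichartz range, and the computation $\tfrac{2}{q_2}+\tfrac{1}{r_0}=\tfrac{1}{2}+\tfrac{4-p}{4}>\tfrac{1}{2}$ shows that $(q_2,r_0)$ lies outside the reach of the standard estimates. Bernstein on frequency-localized data lets you move to higher $r$ at the cost of powers of the frequency, but it does not let you lower $q$ at fixed $r$, which is what is needed here. So as written, the $d=1$ case is not covered either by the paper or by your proposal.
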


\begin{rem}
We may guess that Corollary \ref{cor_result} also holds for a more general condition on $\phi$.
In this point of view,
it will be an interesting question on the convergence rate when $\phi$ belongs to an intermediate space between $L^2(\mathbb{R}^d)$ and $H^{\epsilon}(\mathbb{R}^d)$ $(\epsilon>0)$.
For instance, we can consider an initial datum $\phi$ that belongs to the logarithmic Sobolev space $H_{\log}^s (\mathbb{R}^d)$ $(s>0)$ (see \cite{ET}) with its norm
    $$
    \| \phi \|_{H_{\log}^{s} (\mathbb{R}^d)}
    =\Big( \int_{\mathbb{R}^d} \big( \log(2+|\xi|) \big)^{2s} |\widehat{\phi}(\xi)|^2 d\xi \Big)^{\frac{1}{2}} < \infty.
    $$
(In particular, one can check that the function $\phi_{\alpha}$ in \eqref{radial_exam} belongs to $H_{\log}^{s}(\mathbb{R}^d)$ for any $0< s <\alpha$.)
In this case, we may speculate that
    $$
    \max_{0 \leq n \tau \leq T} \big\| Z_{flt}(n \tau)\phi - u (n\tau) \big\|_{L^2 (\mathbb{R}^d)}
    \leq \Big( \frac{-1}{\log\tau} \Big)^{\frac{s}{2}} C(d,p,T,\phi).
    $$
However, such an estimate remains open since a local well-posedness theory of (\ref{main-equation}) in the logarithmic Sobolev space $H_{\log}^{s}(\mathbb{R}^d)$ has not been established.
\end{rem}

Closing this section, we introduce some related topics.
There are interesting variants of the splitting methods for the higher-order methods \cite{HO,T1,TCN},
for the spectral discrete spaces \cite{GL1,GL2,I1,IZ1,IZ2},
and for the numerical experiment \cite{LM}.
Furthermore, the splitting methods have been studied for various equations such as
the linear Schr\"odinger equation with potentials \cite{JL},
Schr\"odinger equation with Hartree type nonlinearity \cite{L},
cubic nonlinear Schr\"odinger equation on torus \cite{KOS,OS,OY,LW,WY,W,ORS3},
KdV equation \cite{HKR,HKRT,LWY,RS2,WZ,WZ2,LWZ},
Burgers' equation \cite{HLR},
Dirac equation \cite{SWZ},
and various parabolic equations \cite{AO,BCCM,CCDV,CCK,D,DDDLLM,DR,FOS,RS}.\\

\noindent \textbf{Notations}

If a constant depends on some other values, we denote it by $C(d,p,T,\phi)$ or $C_{d,p,T,\phi}$ (depending on dimension $d$, nonlinear exponent $p$, time  $T$, and initial datum $\phi$). Their values may change from line to line.
Also, for an interval $I\subset\mathbb{R}$, we often write $\|\cdot\|_{\ell^q (I;L^r)}$ and $\|\cdot\|_{L^q (I;L^r)}$ for $\|\cdot\|_{\ell^{q} (I; L^r(\mathbb{R}^d))}$ and $\|\cdot\|_{L^{q} (I; L^r(\mathbb{R}^d))}$, respectively. For any number $q\in[1,\infty],$ its H\"older conjugate is denoted by $q'\in[1,\infty].$

%%%%%%%%%%%%%%%%%%%%%%%%%%%%%%%%%%%%%%%%%%%%%%%%%%%%%%%%%%%%%%%%%%%%%%%%%%%%%%%%%

\section{Preliminaries}

In this section, we summarize some basic theories that will be used throughout this paper.
Prior to stating basic theories, we define certain discrete function spaces.
Let $I\subset\mathbb{R}$ be an interval, $\tau\in(0,1)$, $q\in[1,\infty],$ and
$J_\tau=\{n\tau\,:\,n\in\mathbb{Z},n\tau\in I\}$.
Let $X$ be a Banach space with its norm $\|\cdot\|_{X}$. Then we denote by $\ell^q (I;X)=\ell^q_\tau (I;X)$ the space of functions $f:J_\tau\to X$ with norm $\|f(n\tau)\|_{\ell^q (I;X)}<\infty$, where

    $$
    \|f(n\tau)\|_{\ell^q (I;X)}
    = \left\{\begin{array}{ll}
        \big( \tau \sum_{n\tau \in I} \|f(n \tau)\|_X^q \big)^{\frac{1}{q}} & \mbox{for}\quad 1\le q<\infty, \\
        \sup_{n\tau\in I}  \|f(n \tau)\|_X & \mbox{for}\quad q=\infty.
      \end{array}\right.
    $$

Now, we state the local well-posedness theory of $u$ in \eqref{main-equation} and $Z_{flt}$ as follows.

\begin{theoremalpha}[Local well-posedness of $u$ and $Z_{flt}$]\label{thm_LWP}
Let $d\geq1$, $0 < p < \frac{4}{d}$, and $\phi \in L^2 (\mathbb{R}^d)$.
Let $u$ be the solution of \eqref{main-equation} and $Z_{flt}$ the filtered Lie approximation, defined by \eqref{flt-Lie_approx}.
Then there exists a constant $c_{d,p}>0$ such that, with $T_{loc} = c_{d,p} \|\phi\|_{L^2(\mathbb{R}^d)}^{-\frac{4p}{4-dp}}$ and $I_0=[0,T_{loc}]$,
    \begin{equation}\label{NLS-bdd}
    \| u \|_{L^q(I_0; L^{r} (\mathbb{R}^d))} +\| P(\tau)u(n\tau) \|_{\ell^q(I_0; L^{r} (\mathbb{R}^d))}
    \leq C_{d} \|\phi\|_{L^2 (\mathbb{R}^d)}
    \end{equation}
and
    \begin{equation}\label{Lie-bdd}
    \| Z_{flt}(n\tau)\phi \|_{\ell^{q} (I_0; L^{r}(\mathbb{R}^d))}
    \leq C_{d} \|\phi\|_{L^2 (\mathbb{R}^d)}
    \end{equation}
for any $0<\tau<1$ and admissible pair $(q,r)$.
\end{theoremalpha}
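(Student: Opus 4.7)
The plan is to establish \eqref{NLS-bdd} and \eqref{Lie-bdd} in three stages, using Strichartz estimates together with a contraction mapping argument, first for the continuous solution $u$, then for its filtered sample $P(\tau)u(n\tau)$, and finally for the splitting approximation $Z_{flt}$. Throughout I would work with the Schr\"odinger-admissible pair $(q_0,r_0)=\bigl(4(p+2)/(dp),\,p+2\bigr)$, which is the natural pair for the nonlinearity $|u|^p u$ because $(p+1)r_0'=r_0$, and hence $\|\,|u|^p u\,\|_{L^{r_0'}}=\|u\|_{L^{r_0}}^{p+1}$.

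For the continuous solution $u$, I would apply Strichartz to the Duhamel formula
\begin{equation*}
u(t)=S(t)\phi-i\lambda\int_0^t S(t-s)\bigl(|u|^p u\bigr)(s)\,ds
\end{equation*}
and H\"older in time to get
\begin{equation*}
\|\,|u|^p u\,\|_{L^{q_0'}(I;L^{r_0'})}\leq |I|^{\theta}\,\|u\|_{L^{q_0}(I;L^{r_0})}^{p+1}
\end{equation*}
with $\theta=1-dp/4>0$; positivity is equivalent to mass-subcriticality. A contraction on a ball of radius $2C_d\|\phi\|_{L^2}$ in $L^{q_0}([0,T];L^{r_0})$ closes when $T^{\theta}\|\phi\|_{L^2}^p$ is sufficiently small, which forces exactly $T_{loc}\sim\|\phi\|_{L^2}^{-4p/(4-dp)}$. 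A further application of the inhomogeneous Strichartz estimate with an arbitrary admissible pair $(q,r)$ on the left and $(q_0',r_0')$ on the right then yields \eqref{NLS-bdd}.

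For the two discrete bounds I would rely on the discrete-in-time Strichartz estimates for the filtered linear propagator $n\mapsto P(\tau)S(n\tau)$ introduced by Ignat \cite{I2}: for every admissible pair $(q,r)$,
\begin{equation*}
\|P(\tau)S(n\tau)\phi\|_{\ell^q(\mathbb{R}_+;L^r)}\leq C_d\|\phi\|_{L^2},
\end{equation*}
together with the corresponding inhomogeneous estimate for the retarded discrete convolution. For $P(\tau)u(n\tau)$, I would simply apply $P(\tau)$ to the continuous Duhamel identity at $t=n\tau$ and invoke these estimates, reducing the bound to the continuous $L^{q_0'}(I_0;L^{r_0'})$ control on $|u|^p u$ already obtained.

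For $Z_{flt}(n\tau)\phi$, I would unwind the defining recursion $Z_{flt}((n+1)\tau)\phi=P(\tau)S(\tau)N(\tau)Z_{flt}(n\tau)\phi$ using $N(\tau)=I+(N(\tau)-I)$ into a discrete Duhamel-type formula, with linear part a filtered free evolution applied to $\phi$ and nonlinear contributions controlled by the elementary pointwise bound $|(N(\tau)-I)v|\leq\tau|v|^{p+1}$. Applying the inhomogeneous discrete Strichartz estimate with the same H\"older step and exponent-matching $(p+1)r_0'=r_0$ as in the continuous case yields a self-improving inequality
\begin{equation*}
\|Z_{flt}(n\tau)\phi\|_{\ell^{q_0}(I_0;L^{r_0})}\leq C_d\|\phi\|_{L^2}+C_d\,T_{loc}^{\theta}\,\|Z_{flt}(n\tau)\phi\|_{\ell^{q_0}(I_0;L^{r_0})}^{p+1},
\end{equation*}
which closes by a continuity-in-$n$ bootstrap under the same smallness imposed in the first stage; a further pass through the discrete Strichartz estimates extends the bound to all admissible pairs, giving \eqref{Lie-bdd}. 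The main obstacle is the careful uniform-in-$\tau$ use of the filtered discrete Strichartz estimates and the matching of discrete and continuous time-integrability in the Duhamel expansions; these are by now standard after \cite{I2,CK,ORS}, and I would cite them as black boxes rather than reprove them.
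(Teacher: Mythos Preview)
Your proposal is correct and follows essentially the same route as the paper: the continuous bound is the standard contraction argument via Strichartz with the pair $(q_0,r_0)$ (cited from Cazenave), the discrete bound for $P(\tau)u(n\tau)$ is obtained by applying $P(\tau)$ to the continuous Duhamel formula and feeding in the continuous control on $|u|^p u$ (this is exactly what the Appendix does), and the bound for $Z_{flt}$ via the discrete Duhamel expansion and bootstrap is precisely Ignat's argument, which the paper simply cites. Your sketch is in fact more detailed than the paper's own treatment.
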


The proof of \eqref{NLS-bdd} in Theorem \ref{thm_LWP} is standard.
For the estimate of the time-continuous norm $\|u \|_{L^q(I_0; L^{r} )}$, see Sections 4 and 5 of Cazenave \cite{C}.
The estimate of the time-discrete norm $\| P(\tau)u(n\tau) \|_{\ell^q(I_0; L^{r} )}$ can be obtained by the same argument regarding the time-continuous norm.
For readers' convenience, we give a proof of the time-discrete norm $\| P(\tau)u(n\tau) \|_{\ell^q(I_0; L^{r} )}$ in Appendix.
Also, the local well-posedness \eqref{Lie-bdd} of $Z_{flt}$ is provided in Theorem 1.1 of Ignat \cite{I2}.

Second, we state the Strichartz estimates for the linear Schr\"odinger operator $S(t)$ $(t\in\mathbb{R})$.

\begin{theoremalpha}[Time-continuous and time-discrete Strichartz estimates]\label{thm-str}
(i) There exists $C_{d}>0$ such that
    \begin{equation}\label{Str_homo}
    \|S(t) \phi \|_{L^q (\mathbb{R}; L^r (\mathbb{R}^d))}
    + \|S(n\tau)P(\tau) \phi \|_{\ell^q (\mathbb{R}; L^r (\mathbb{R}^d))}
    \leq C_{d} \|\phi\|_{L^2 (\mathbb{R}^d)}
    \end{equation}
for all $\phi\in L^2 (\mathbb{R}^d)$, $0<\tau<1$, and admissible pairs $(q,r)$.

(ii) There exists $C_{d}>0$ such that if $(q,r)$ and $(\widetilde{q}, \widetilde{r})$ are admissible pairs with $(q,\widetilde{q})\ne(2,2)$ and $0<\tau<1$, then
    \begin{equation}\label{Str_inhomo1}
    \Big\| \int_{s< t} S(t-s) F(s) ds \Big\|_{L^q (\mathbb{R}; L^{r}(\mathbb{R}^d))}
    + \Big\| \int_{s < n \tau} S(n \tau -s) P(\tau) F(s) ds \Big\|_{\ell^q (\mathbb{R}; L^r (\mathbb{R}^d))}
    \leq C_{d} \| F\|_{L^{\widetilde{q}'} (\mathbb{R}; L^{\widetilde{r}'} (\mathbb{R}^d))}
    \end{equation}
    for all $F\in L^{\widetilde{q}'}(\mathbb{R}; L^{\widetilde{r}'}(\mathbb{R}^d))$, and
    \begin{equation}\label{Str_inhomo2}
    \bigg\| \tau \sum_{k=-\infty}^{n-1}S((n-k) \tau) P(\tau) F(k\tau)\bigg\|_{\ell^q (\mathbb{R}; L^r (\mathbb{R}^d))}
    \leq C_{d} \|F(n\tau)\|_{\ell^{\widetilde{q}'}(\mathbb{R}; L^{\widetilde{r}'} (\mathbb{R}^d))}
    \end{equation}
for all $F\in \ell^{\widetilde{q}'}(\mathbb{R}; L^{\widetilde{r}'}(\mathbb{R}^d))=\ell^{\widetilde{q}'}_\tau(\mathbb{R}; L^{\widetilde{r}'}(\mathbb{R}^d))$.
\end{theoremalpha}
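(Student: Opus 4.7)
The plan is to treat the continuous statements in (i)--(ii) by appealing to the classical Strichartz theory, and to prove the discrete analogs by an $L^2$-based $TT^*$ argument in which the frequency cutoff $P(\tau)$, supported in $|\xi|\lesssim\tau^{-1/2}$, plays the role of the time-regularisation that is automatic in the continuous setting. The continuous bounds in \eqref{Str_homo} and \eqref{Str_inhomo1} are the standard Strichartz estimates; they follow from the dispersive bound $\|S(t)\phi\|_{L^r}\lesssim|t|^{-d(1/2-1/r)}\|\phi\|_{L^{r'}}$ together with $L^2$-unitarity, $TT^*$, the Hardy--Littlewood--Sobolev inequality, and (at the double endpoint $q=2$, $d\ge 3$) a Keel--Tao bilinear argument, and we take them as given.

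For the homogeneous discrete bound, define $T_\tau\phi=(S(n\tau)P(\tau)\phi)_{n\in\mathbb{Z}}$, whose adjoint is $T_\tau^*F=P(\tau)\tau\sum_n S(-n\tau)F(n\tau)$. By duality, the desired bound $T_\tau:L^2\to\ell^q(L^r)$ is equivalent to boundedness of
$$
(T_\tau T_\tau^*F)(n\tau)=P(\tau)^2\tau\sum_{m\in\mathbb{Z}}S((n-m)\tau)F(m\tau)\,:\,\ell^{q'}(L^{r'})\to\ell^q(L^r).
$$
For the off-diagonal terms $m\neq n$, the dispersive estimate together with the admissibility identity $d(1/2-1/r)=2/q$ produces a kernel norm of $\tau^{1-2/q}|n-m|^{-2/q}\|F(m\tau)\|_{L^{r'}}$; for the diagonal term $m=n$, Bernstein applied to $P(\tau)^2$ gives the matching weight $\tau^{1-2/q}\|F(n\tau)\|_{L^{r'}}$. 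Summing in $m$ by the discrete Hardy--Littlewood--Sobolev inequality (valid in the range $q>2$, since $|k|^{-2/q}$ is the kernel of a fractional integration of order $1-2/q\in(0,1)$) and then in $n$, the $\tau$-factors collapse to $\tau^{1/q}\cdot\tau^{1-2/q}\cdot\tau^{-1/q'}=\tau^0$, yielding the claimed bound with a constant independent of $\tau$. The double endpoint $q=2$ is handled by a direct discrete analog of the Keel--Tao bilinear argument.

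The inhomogeneous discrete bound \eqref{Str_inhomo2} is obtained in two steps. First, the same $TT^*$ computation with two independent admissible pairs $(q,r)$ and $(\widetilde q,\widetilde r)$ gives the non-retarded estimate $\|\tau\sum_{m\in\mathbb{Z}}S((n-m)\tau)P(\tau)F(m\tau)\|_{\ell^q L^r}\lesssim\|F\|_{\ell^{\widetilde q'}L^{\widetilde r'}}$. Second, the retarded sum $\sum_{m<n}$ is extracted from the untruncated one by the (discrete) Christ--Kiselev lemma, which applies whenever $q>\widetilde q'$, i.e., exactly on the complement of the excluded endpoint $(q,\widetilde q)=(2,2)$. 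The continuous inhomogeneous bound \eqref{Str_inhomo1} is produced by the identical line of reasoning with discrete sums replaced by integrals.

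The main obstacle is the precise bookkeeping of $\tau$-powers: one must verify that the Bernstein gain produced by $P(\tau)^2$ on the diagonal exactly reproduces the $\tau^{1-2/q}$ weight obtained from the dispersive estimate off-diagonal, so that the discrete $TT^*$ kernel scales like its continuous counterpart. This matching hinges on the fact that $P(\tau)$ localises to frequencies of size $\tau^{-1/2}$, which makes $(\tau^{-1/2})^{d(1/r'-1/r)}=\tau^{-2/q}$ for every admissible $(q,r)$. Once this identification is recognised, the remainder of the argument is a faithful discretisation of the Keel--Tao machinery, and no ideas beyond the continuous theory are required.
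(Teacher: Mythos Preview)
Your proposal is correct and aligns with the paper's treatment: the paper does not prove Theorem~\ref{thm-str} directly but cites Strichartz, Keel--Tao, and Christ--Kiselev for the continuous parts and Ignat \cite[Theorem 2.1 and Lemma 4.5]{I2} for the discrete parts, where the argument is precisely the discrete $TT^*$ scheme (dispersive bound off-diagonal, Bernstein on the diagonal via $P(\tau)$, discrete Hardy--Littlewood--Sobolev in the time index, then Christ--Kiselev for the retarded sums) that you describe, including the $\tau$-power bookkeeping. The only piece you do not address explicitly is the semi-discrete term in \eqref{Str_inhomo1} (continuous-in-$s$ input, $\ell^q$-in-$n$ output), but it follows immediately by composing the continuous adjoint $T^*:L^{\widetilde q'}L^{\widetilde r'}\to L^2$ with your discrete homogeneous bound $T_\tau:L^2\to\ell^q L^r$ and then invoking Christ--Kiselev, so no additional idea is needed.
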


For the homogeneous estimate of the time-continuous norm $\|S(t) \phi \|_{L^q (\mathbb{R}; L^r )}$,
Strichartz \cite{S} first proved \eqref{Str_homo} for $q=r$; then it was extended by Keel-Tao \cite{KT} to all admissible pairs $(q,r)$, including the endpoint $(q,r)=(2,\frac{2d}{d-2})$.
The inhomogeneous estimate  \eqref{Str_inhomo1} of the time-continuous norm $\big\| \int_{s< t} S(t-s) F(s) ds \big\|_{L^q (\mathbb{R}; L^{r})}$ follows from the duality argument and Christ-Kiselev lemma \cite{CK2}.

The proof of the estimate \eqref{Str_homo} of the time-discrete norm $\|S(n\tau) P(\tau) \phi \|_{\ell^q (\mathbb{R}; L^r )}$ is essentially the same as the time-continuous one.
We refer the readers to Theorem 2.1 and Lemma 4.5 in Ignat \cite{I2}.

Next, we give a basic multiplier theory in harmonic analysis.
The lemma below follows from classical Bernstein's inequality.

\begin{lem}
Let $P(\tilde{\tau})$ be the frequency localized multiplier, defined as in \eqref{freq-loc-oper}.
Then, for any $1\leq q \leq r <\infty$, $s>0$, and $\phi:\mathbb{R}^d\rightarrow\mathbb{C}$, we have
    \begin{equation}\label{multiplier_4}
    \big\| \phi - P(\widetilde{\tau}) \phi \big\|_{L^r (\mathbb{R}^d)}
    \leq C \widetilde{\tau}^{\frac{s}{2}} \| (-\Delta)^{\frac{s}{2}} \phi\|_{L^r (\mathbb{R}^d)} ,
    \end{equation}
    \begin{equation}\label{multiplier_3}
    \| P(\widetilde{\tau}) \phi\|_{L^r (\mathbb{R}^d)}
    \leq C \|\phi\|_{L^r (\mathbb{R}^d)} ,
    \end{equation}
    \begin{equation}\label{multiplier_2}
    \big\| (-\Delta)^{\frac{s}{2}} P(\widetilde{\tau}) \phi \big\|_{L^r (\mathbb{R}^d)}
    \leq C \widetilde{\tau}^{-\frac{s}{2}} \| \phi \|_{L^r (\mathbb{R}^d)},
    \end{equation}
and
    \begin{equation}\label{multiplier_1}
    \| P(\widetilde{\tau}) \phi \|_{L^{r}(\mathbb{R}^d)}
    \leq C \widetilde{\tau}^{\frac{d}{2} \left( \frac{1}{r}-\frac{1}{q}\right)} \| \phi\|_{L^{q}(\mathbb{R}^d)}.
    \end{equation}
\end{lem}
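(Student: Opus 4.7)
The plan is to establish all four estimates through a single mechanism: realize each operator as a convolution with an explicit rescaled kernel, and then invoke Young's convolution inequality. The starting point is to write $P(\widetilde\tau)\phi = K_{\widetilde\tau} * \phi$, where $K_{\widetilde\tau}(x) = \widetilde\tau^{-d/2}\check\chi(\widetilde\tau^{-1/2}x)$ and $\check\chi$ denotes the inverse Fourier transform of $\chi$. Since $\chi \in C_c^\infty(\mathbb{R}^d)$, its inverse Fourier transform $\check\chi$ is Schwartz, so $\|\check\chi\|_{L^a(\mathbb{R}^d)} < \infty$ for every $a \in [1,\infty]$, and a change of variable gives the clean scaling identity $\|K_{\widetilde\tau}\|_{L^a(\mathbb{R}^d)} = \widetilde\tau^{(d/2)(1/a-1)}\|\check\chi\|_{L^a(\mathbb{R}^d)}$.

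From this identity, \eqref{multiplier_3} follows at once from Young's inequality applied with kernel in $L^1$. For \eqref{multiplier_1}, I would choose $a \in [1,\infty]$ by the Young relation $1+1/r = 1/a + 1/q$ (permitted because $q \leq r$) and apply Young's $L^a * L^q \hookrightarrow L^r$; the scaling formula then produces exactly the factor $\widetilde\tau^{(d/2)(1/r - 1/q)}$ required by \eqref{multiplier_1}.

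For the fractional-derivative estimates I rewrite the symbols. The multiplier of $(-\Delta)^{s/2}P(\widetilde\tau)$ is $|\xi|^s\chi(\widetilde\tau^{1/2}\xi) = \widetilde\tau^{-s/2}m(\widetilde\tau^{1/2}\xi)$ with $m(\eta) = |\eta|^s\chi(\eta)$, so the rescaled kernel has $L^1$-norm equal to $\widetilde\tau^{-s/2}\|\check m\|_{L^1}$, and Young's inequality yields \eqref{multiplier_2} provided $\check m \in L^1(\mathbb{R}^d)$. Similarly, $\phi - P(\widetilde\tau)\phi = Q_{\widetilde\tau}\bigl((-\Delta)^{s/2}\phi\bigr)$ where $Q_{\widetilde\tau}$ has multiplier $|\xi|^{-s}(1-\chi(\widetilde\tau^{1/2}\xi)) = \widetilde\tau^{s/2}n(\widetilde\tau^{1/2}\xi)$ with $n(\eta) = |\eta|^{-s}(1-\chi(\eta))$. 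Here $n \in C^\infty(\mathbb{R}^d)$ because $1-\chi$ vanishes on $B(0,1)$, removing the potential singularity at the origin, and repeated integration by parts combined with the algebraic decay $\partial^\alpha n = O(|\eta|^{-s-|\alpha|})$ at infinity places $\check n$ in $L^1(\mathbb{R}^d)$; Young's inequality then gives \eqref{multiplier_4}.

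The only delicate point, and the main obstacle, is verifying $\check m \in L^1(\mathbb{R}^d)$, since $|\eta|^s$ fails to be smooth at the origin when $s$ is not an even integer. I plan a Littlewood--Paley-type split $m = m\psi + m(1-\psi)$, where $\psi$ is a smooth cutoff equal to one near the origin: the second summand is smooth and compactly supported and so contributes a Schwartz function, while the first summand decomposes dyadically into shells $\{2^{-k-1}\leq |\eta| \leq 2^{-k}\}_{k \geq 0}$, each of which (by standard scaling) has inverse Fourier transform with $L^1$-norm controlled by the geometric factor $2^{-ks}$, hence summable over $k$. This finishes the kernel bound and closes all four inequalities.
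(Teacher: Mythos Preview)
Your proposal is correct and in fact supplies precisely the standard argument behind the classical Bernstein inequalities that the paper merely cites without proof. The paper's entire justification is the single sentence ``The lemma below follows from classical Bernstein's inequality,'' so your Young-inequality-plus-scaled-kernel approach (including the dyadic decomposition near the origin for $m(\eta)=|\eta|^s\chi(\eta)$ and near infinity for $n(\eta)=|\eta|^{-s}(1-\chi(\eta))$ to secure the $L^1$ kernel bounds) is exactly how one proves Bernstein in the first place and matches the intended route.
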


%For readers’ convenience, we refer its proof in Theorem 5.2.2. in Grafakos \cite{G}, Lemma 2.6 in \cite{CK} and Theorem 4.4 in \cite{I2}.
%
Lastly, we give basic pointwise inequalities, following from the Mean Value Theorem. (See Lemma 4.2 in \cite{I2}.)

\begin{lem}
For any $p \in (0, \infty)$, there exists a constant $c_p>0$ such that
    \begin{equation}\label{MVT-1}
    \left| \frac{N(\tau) - I}{\tau} v - \frac{N(\tau) - I}{\tau}w \right|
    \leq c_p|v-w|\big(|v|^p + |w|^p\big)
    \end{equation}
and
    \begin{equation}\label{MVT-2}
    \bigg|\frac{N(\tau) - I}{\tau} v -i\lambda |v|^p v \bigg|
    \leq c_p \tau |v|^{2p+1}
    \end{equation}
for all $0<\tau<1$ and $v, w \in \mathbb{C}$.
\end{lem}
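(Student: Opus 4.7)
The plan is to use the explicit formula $N(\tau)v = e^{-i\tau\lambda|v|^p}v$ and reduce both bounds to elementary scalar estimates for $x\mapsto e^{ix}$ on $\mathbb{R}$ together with comparison inequalities for the map $t\mapsto t^p$ on $[0,\infty)$. Since the two statements are pointwise in $v,w\in\mathbb{C}$, no PDE machinery is involved; everything is classical calculus.

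For \eqref{MVT-2}, I would rely on the second-order remainder $|e^{ix} - 1 - ix| \leq x^2/2$ for $x \in \mathbb{R}$, applied with $x = -\tau\lambda|v|^p$. This gives $|e^{-i\tau\lambda|v|^p} - 1 + i\tau\lambda|v|^p| \leq \tau^2|v|^{2p}/2$; dividing by $\tau$ and multiplying by $v$ then yields the bound $c_p \tau |v|^{2p+1}$ on the left-hand side of \eqref{MVT-2} (the sign in the stated inequality is consistent with the limit $\frac{N(\tau)-I}{\tau}v\to -i\lambda|v|^p v$ as $\tau\to 0^+$).

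For \eqref{MVT-1}, I would split
\[
\frac{N(\tau)-I}{\tau}v - \frac{N(\tau)-I}{\tau}w = \frac{e^{-i\tau\lambda|v|^p}-1}{\tau}\,(v-w) + \frac{e^{-i\tau\lambda|v|^p}-e^{-i\tau\lambda|w|^p}}{\tau}\,w.
\]
The first summand is controlled by the Lipschitz-type estimate $|e^{ix}-1|\leq |x|$, giving $|v|^p|v-w|\leq (|v|^p+|w|^p)|v-w|$. For the second summand I would use $|e^{ia}-e^{ib}|\leq|a-b|$ to reduce matters to the scalar inequality $|w|\,\bigl||v|^p-|w|^p\bigr| \leq c_p |v-w|\,(|v|^p+|w|^p)$.

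The main obstacle is producing this last scalar inequality uniformly in $p>0$, since for $0<p<1$ the naive Hölder-type bound $\bigl||v|^p-|w|^p\bigr|\leq |v-w|^p$ does not directly carry the required factor of $|v-w|$. My plan is a short case analysis based on the relative sizes of $|v-w|$ and $\max(|v|,|w|)$. If $|v-w|\geq \max(|v|,|w|)/2$, then $|w|\leq 2|v-w|$ and the trivial bound $\bigl||v|^p-|w|^p\bigr|\leq |v|^p+|w|^p$ already suffices. If $|v-w|\leq \max(|v|,|w|)/2$, then $|v|\sim|w|$, and applying the mean value theorem to $t\mapsto t^p$ on $[\min(|v|,|w|),\max(|v|,|w|)]$ yields $\bigl||v|^p-|w|^p\bigr|\leq c_p\,(|v|^{p-1}+|w|^{p-1})\,|v-w|$ (valid for both $p\geq 1$ and $0<p<1$ because $|v|\sim|w|$ makes the endpoint that governs $t^{p-1}$ comparable to either $|v|$ or $|w|$). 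Young's inequality then absorbs the extra factor of $|w|$ into $|v|^p+|w|^p$. Combining the two cases produces the pointwise inequality and, with the decomposition above, completes the proof of \eqref{MVT-1}.
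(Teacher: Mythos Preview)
Your argument is correct and is exactly the kind of Mean-Value-Theorem calculation the paper alludes to: the paper does not actually prove this lemma but simply remarks that the inequalities follow from the Mean Value Theorem and cites Lemma~4.2 of Ignat. Your Taylor bound $|e^{ix}-1-ix|\le x^2/2$ for \eqref{MVT-2} and your decomposition together with the case analysis on $|v-w|$ versus $\max(|v|,|w|)$ for \eqref{MVT-1} constitute a complete elementary proof along those lines.
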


%%%%%%%%%%%%%%%%%%%%%%%%%%%%%%%%%%%%%%%%%%%%%%%%%%%%%%%%%%%%%%%%%%%%%%%%%%%%%%%%%%%%%%%%%%%%%%%%

\section{Local-in-time error estimates}\label{sec_local}

In this section, we derive local-in-time error estimates of the difference between $P(\tau)u(n\tau)$ and $Z_{flt}(n\tau)\phi$.
Proposition \ref{local_main_thm} below constitutes the main part of our arguments.

\begin{prop}\label{local_main_thm}
Let $1\leq d\leq3$, $0< p < \frac{4}{d}$, and $\phi\in L^2(\mathbb{R}^d)$.
Then there exists a constant $c_{d,p}>0$ such that, with $T_{loc}=c_{d,p} \|\phi\|_{L^2(\mathbb{R}^d)}^{-\frac{4p}{4-dp}}$  and $I_0= [0,T_{loc}]$,
    \begin{equation}\label{key_estimate}
    \begin{aligned}
    &\big\| Z_{flt}(n\tau)\phi - P(\tau) u(n\tau) \big\|_{\ell^{q}(I_0; L^r(\mathbb{R}^d))} \\
    &\quad\leq \big\| P(\tau) u(n\tau) - P(\widetilde{\tau})u(n\tau) \big\|_{\ell^{q_0}(I_0; L^{r_0}(\mathbb{R}^d))} + \big\| u - P(\widetilde{\tau})u \big\|_{L^{q_0}(I_0; L^{r_0}(\mathbb{R}^d))}  \\
    &\qquad\quad +\Big(\frac{\tau}{\widetilde{\tau}}\Big)^{\frac{1}{2}}  \big( \|\phi\|_{L^2(\mathbb{R}^d)} +\|\phi\|_{L^2(\mathbb{R}^d)}^{p+1}\big)
    \end{aligned}
    \end{equation}
for any $0<\tau \leq \widetilde{\tau}<1$ and admissible pair $(q,r)$, where $(q_0, r_0)$ denotes the admissible pair, given by
    \begin{equation}\label{admisible_q0r0}
    \frac{1}{r_0}= \frac{1}{p+2}\quad\mbox{and}\quad \frac{1}{q_0}=\frac{dp}{4(p+2)}.
    \end{equation}
\end{prop}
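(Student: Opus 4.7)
My plan is to set up parallel Duhamel-type representations for $P(\tau)u(n\tau)$ and $Z_{flt}(n\tau)\phi$, subtract them, and bound each resulting piece by the discrete/continuous Strichartz estimates of Theorem~\ref{thm-str} together with the a priori bounds \eqref{NLS-bdd}--\eqref{Lie-bdd} and the pointwise inequalities \eqref{MVT-1}--\eqref{MVT-2}. Applying $P(\tau)$ to the Duhamel formula for $u$ gives
\[
P(\tau)u(n\tau)=P(\tau)S(n\tau)\phi-i\lambda\int_0^{n\tau}P(\tau)S(n\tau-s)|u(s)|^pu(s)\,ds,
\]
while iterating $Z_{flt}((k+1)\tau)=P(\tau)S(\tau)N(\tau)Z_{flt}(k\tau)$ and using $(N(\tau)-I)v=-i\tau\lambda|v|^pv+\rho(\tau,v)$ with $|\rho(\tau,v)|\le c_p\tau^2|v|^{2p+1}$ from \eqref{MVT-2} produces
\[
Z_{flt}(n\tau)\phi=(P(\tau)S(\tau))^nP(\tau)\phi-i\lambda\tau\sum_{k=0}^{n-1}(P(\tau)S(\tau))^{n-k}|Z_{flt}(k\tau)|^pZ_{flt}(k\tau)+\sum_{k=0}^{n-1}(P(\tau)S(\tau))^{n-k}\rho(\tau,Z_{flt}(k\tau)).
\]
Taking the difference isolates a \emph{linear mismatch} $(P(\tau)^{n+1}-P(\tau))S(n\tau)\phi$, a \emph{main nonlinear} piece comparing the discrete sum to the integral of $|u|^pu$, and a \emph{cubic remainder} $\sum_{k=0}^{n-1}(P(\tau)S(\tau))^{n-k}\rho(\tau,Z_{flt}(k\tau))$.

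The pair $(q_0,r_0)$ in \eqref{admisible_q0r0} is the natural Strichartz exponent for $|u|^pu$: the identity $r_0=p+2$ gives $r_0'(p+1)=r_0$, so H\"older yields $\||f|^pg\|_{L^{r_0'}}\le\|f\|_{L^{r_0}}^p\|g\|_{L^{r_0}}$, and mass-subcriticality $p<4/d$ makes the corresponding time exponents Strichartz admissible. For the main nonlinear piece I would insert $\pm|P(\widetilde{\tau})u|^pP(\widetilde{\tau})u$ inside both the integral and the sum. The continuous difference $|u|^pu-|P(\widetilde{\tau})u|^pP(\widetilde{\tau})u$ is controlled pointwise via \eqref{MVT-1} by $|u-P(\widetilde{\tau})u|(|u|^p+|P(\widetilde{\tau})u|^p)$, so H\"older together with the inhomogeneous Strichartz \eqref{Str_inhomo1}, the bound \eqref{NLS-bdd}, and \eqref{multiplier_3} controls its contribution by a constant times the second term on the right of \eqref{key_estimate}. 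The analogous discrete operation, using \eqref{Str_inhomo2} with \eqref{Lie-bdd} and \eqref{multiplier_3}, produces the first term on the right of \eqref{key_estimate} plus the quantity $\|Z_{flt}(n\tau)\phi-P(\tau)u(n\tau)\|_{\ell^{q_0}(I_0;L^{r_0})}$ that we are estimating; the prefactor on the latter is a power of $\|Z_{flt}\|_{\ell^{q_0}(I_0;L^{r_0})}+\|u\|_{L^{q_0}(I_0;L^{r_0})}$, which by \eqref{NLS-bdd}--\eqref{Lie-bdd} can be made smaller than any prescribed constant by choosing $c_{d,p}$ sufficiently small in $T_{loc}=c_{d,p}\|\phi\|_{L^2}^{-4p/(4-dp)}$, so this self-referential term is absorbed into the left-hand side.

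The hardest step, as I expect, is the remaining pure time-sampling error between $\int_0^{n\tau}P(\tau)S(n\tau-s)|P(\widetilde{\tau})u(s)|^pP(\widetilde{\tau})u(s)\,ds$ and its discrete analogue at the nodes $k\tau$; here one must produce exactly the half-power $(\tau/\widetilde{\tau})^{1/2}$ rather than the naive $\tau/\widetilde{\tau}$. The heuristic is that after the $\widetilde{\tau}$-filter the integrand effectively has $H^1$-regularity of size $\widetilde{\tau}^{-1/2}$ (via \eqref{multiplier_2}) and the $H^1$-level splitting error $\tau^{1/2}$ combines to give $(\tau/\widetilde{\tau})^{1/2}$; equivalently, this factor is obtained by interpolating the trivial Strichartz bound against the fine $\tau/\widetilde{\tau}$ bound coming from the $\widetilde{\tau}^{-1}$-Lipschitz regularity of the integrand under $S(t)$. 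Combined with \eqref{NLS-bdd}, this yields the $(\tau/\widetilde{\tau})^{1/2}\|\phi\|_{L^2}^{p+1}$ contribution to the third term of \eqref{key_estimate}. The cubic remainder is handled analogously via \eqref{MVT-2}, \eqref{Str_inhomo2}, \eqref{Lie-bdd}, and the Bernstein estimate \eqref{multiplier_1} (applicable since $Z_{flt}$ is $P(\tau)$-filtered) to trade the higher power $|Z_{flt}|^{2p+1}$ for a power of $\|Z_{flt}\|_{L^{r_0}}$ at the cost of a $\widetilde{\tau}$-factor, giving a contribution $\lesssim(\tau/\widetilde{\tau})^{1/2}\|\phi\|_{L^2}$. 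Finally, the linear mismatch reduces under $\tau\le\widetilde{\tau}$ (with an inessential rescaling of $\chi$ arranging $\widetilde{\tau}\ge 4\tau$; otherwise $(\tau/\widetilde{\tau})^{1/2}\ge 1/2$ makes the bound trivial via \eqref{NLS-bdd}--\eqref{Lie-bdd}) to $(P(\tau)^{n+1}-P(\tau))(I-P(\widetilde{\tau}))S(n\tau)\phi$, since the multiplier $\chi(\tau^{1/2}\xi)^{n+1}-\chi(\tau^{1/2}\xi)$ vanishes on $\supp\chi(\widetilde{\tau}^{1/2}\xi)$; the discrete Strichartz \eqref{Str_homo} then dominates this by $C\|(I-P(\widetilde{\tau}))\phi\|_{L^2}$, which is absorbed into the first two terms of \eqref{key_estimate} via standard Strichartz identifications. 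Summing all contributions yields \eqref{key_estimate} at the pair $(q_0,r_0)$, and a final application of the inhomogeneous Strichartz with a general admissible $(q,r)$ upgrades the bound to the announced form.
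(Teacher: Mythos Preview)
Your overall strategy—Duhamel representations, subtract, insert $\pm|P(\widetilde{\tau})u|^pP(\widetilde{\tau})u$, apply Strichartz with the $(q_0,r_0)$ pair, and absorb the self-referential term by shrinking $c_{d,p}$—matches the paper's five-term decomposition $\mathcal{A}_1+\cdots+\mathcal{A}_5$, and the Lipschitz pieces ($\mathcal{A}_1,\mathcal{A}_2,\mathcal{A}_5$ there) are handled exactly as you describe.

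The genuine gap is your treatment of the time-sampling error, which is only heuristic. The paper does \emph{not} obtain $(\tau/\widetilde{\tau})^{1/2}$ by interpolation; there is no family to interpolate across. Instead it applies the Fundamental Theorem of Calculus to rewrite
\[
\mathcal{A}_4=-i\int_0^{n\tau}\Psi(t)\,S(n\tau-t)P(\tau)(i\partial_t+\Delta)\big(|P(\widetilde{\tau})u|^pP(\widetilde{\tau})u\big)\,dt,\qquad |\Psi(t)|\le\tau,
\]
and then expands $(i\partial_t+\Delta)\Phi$ using the equation $i\partial_t u=-\Delta u+\lambda|u|^pu$ and the product rule into three explicit pieces. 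The dominant one is $\tau\,\|\Delta P(\tau)(|P(\widetilde{\tau})u|^pP(\widetilde{\tau})u)\|_{L^1(I_0;L^2)}$: Bernstein \eqref{multiplier_2} converts $\Delta P(\tau)$ to $\tau^{-1/2}\nabla$, the chain rule leaves $|P(\widetilde{\tau})u|^p\nabla P(\widetilde{\tau})u$, and a second application of \eqref{multiplier_2} on $\nabla P(\widetilde{\tau})u$ costs $\widetilde{\tau}^{-1/2}$, so the net factor is $\tau\cdot(\tau\widetilde{\tau})^{-1/2}=(\tau/\widetilde{\tau})^{1/2}$. Your phrase ``the $H^1$-level splitting error $\tau^{1/2}$'' is precisely the conclusion of this computation, not an input you can invoke. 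Two smaller discrepancies: the paper's Duhamel identity \eqref{Duhamel_Z} for $Z_{flt}$ carries only a single $P(\tau)$ in each summand, so no ``linear mismatch'' term $(P(\tau)^{n+1}-P(\tau))S(n\tau)\phi$ ever appears, and your proposed absorption of $\|(I-P(\widetilde{\tau}))\phi\|_{L^2}$ into the $L^{q_0}(L^{r_0})$ terms on the right of \eqref{key_estimate} is not justified as stated. Also, the cubic remainder is placed on $P(\widetilde{\tau})u$ (the paper's $\mathcal{A}_3$), not on $Z_{flt}$; this matters because $Z_{flt}$ is only $P(\tau)$-filtered, so Bernstein \eqref{multiplier_1} applied to it yields a $\tau$-power rather than the $\widetilde{\tau}$-power needed to reach the stated bound.
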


As we can see in the proof below, the result of Proposition \ref{local_main_thm} is still valid when $I_0$ is replaced by $I_0'=[0,T_{loc}']$ $(0<T_{loc}'\le T_{loc})$.

\begin{proof}
Let $0<\tau \leq\widetilde{\tau}<1$ be given.
Since the solutions of \eqref{main-equation} and \eqref{flt-Lie_approx} can be written as
    \begin{equation}\label{Duhamel_u}
    P(\tau) u(n\tau)
    =S(n\tau)P(\tau)\phi +
    i\lambda \int_{0}^{n\tau} S(n \tau -s) P(\tau) \big( |u(s)|^p u(s)\big) ds
    \end{equation}
and
    \begin{equation}\label{Duhamel_Z}
    Z_{flt}(n\tau)\phi
    =S(n\tau) P(\tau) \phi +
    \tau \sum_{k=0}^{n-1} S(n \tau -k \tau) P(\tau) \frac{N (\tau) - I}{\tau} Z_{flt}(k \tau)\phi, \quad n \geq 1,
    \end{equation}
respectively (see \cite{I2} and \cite{CK}),
we can divide their difference into the following five parts:
    $$
    Z_{flt}(n\tau)\phi - P(\tau)u(n\tau)
    = \mathcal{A}_1 + \mathcal{A}_2 + \mathcal{A}_3 + \mathcal{A}_4 + \mathcal{A}_5,
    $$
where
    $$
    \begin{aligned}
    \mathcal{A}_1 := &\, \tau \sum_{k=0}^{n-1} S(n \tau -k \tau) P(\tau) \left( \frac{N(\tau) - I}{\tau} Z_{flt}(k\tau)\phi
        - \frac{N(\tau) - I}{\tau} P(\tau)u(k\tau) \right), \\
    \mathcal{A}_2 := &\, \tau \sum_{k=0}^{n-1} S(n \tau -k \tau) P(\tau) \left( \frac{N(\tau) - I}{\tau} P(\tau)u(k\tau)
        - \frac{N(\tau) - I}{\tau} P(\widetilde{\tau}) u(k\tau) \right), \\
    \mathcal{A}_3 := &\, \tau \sum_{k=0}^{n-1} S(n \tau -k \tau)P(\tau) \left( \frac{N(\tau) - I}{\tau} P(\widetilde{\tau}) u(k\tau)
        - i\lambda \big| P(\widetilde{\tau}) u(k\tau)\phi\big|^p P(\widetilde{\tau}) u(k\tau) \right), \\
    \mathcal{A}_4 := &\, i\lambda\tau \sum_{k=0}^{n-1} S(n \tau -k \tau)P(\tau) \Big( \big| P(\widetilde{\tau}) u(k\tau)\big|^p P(\widetilde{\tau}) u(k\tau) \Big)\\
    &\quad - i\lambda\int_0^{n\tau} S(n\tau -s)P(\tau) \Big( \big| P(\widetilde{\tau}) u(s) \big|^p P(\widetilde{\tau}) u(s) \Big) ds, \\
    \mathcal{A}_5 := &\, i\lambda\int_0^{n\tau} S(n\tau -s)P(\tau) \Big( \big| P(\widetilde{\tau}) u(s) \big|^p P(\widetilde{\tau})u(s)  - |u(s)|^p u(s)\Big) ds.
    \end{aligned}
    $$
Let
    $$
    T_{loc}=c_{d,p} \|\phi\|_{L^2(\mathbb{R}^d)}^{-\frac{4p}{4-dp}}
    \quad\mbox{and}\quad I_0= [0,T_{loc}],
    $$
where $c_{d,p}>0$ is a constant
(which is at most the value of  $c_{d,p}>0$ from Theorem \ref{thm_LWP})
to be determined later.
Then, by the triangle inequality, we get
    \begin{equation}\label{triangle_bdd}
    \begin{split}
    &\big\| Z_{flt}(n\tau)\phi - P(\tau) u(n\tau) \big\|_{\ell^{q}(I_0; L^r)} \\
    &\quad\leq \| \mathcal{A}_1 \|_{\ell^{q}(I_0; L^r)} + \| \mathcal{A}_2 \|_{\ell^{q}(I_0; L^r)} + \| \mathcal{A}_3 \|_{\ell^{q}(I_0; L^r)} + \| \mathcal{A}_4 \|_{\ell^{q}(I_0; L^r)} + \| \mathcal{A}_5 \|_{\ell^{q}(I_0; L^r)}
    \end{split}
    \end{equation}
for any admissible pair $(q,r)$.
Here, estimating $\mathcal{A}_4$ will be a major part of the proof.

First, to estimate $\mathcal{A}_1$, we use \eqref{MVT-1} to find that
    \begin{equation}\label{A1_MVT_bdd}
    \begin{aligned}
    &\Big| \frac{N(\tau) - I}{\tau} Z_{flt}(k\tau)\phi - \frac{N(\tau) - I}{\tau} P(\tau)u(k\tau) \Big| \\
    &\qquad\leq c_p \big|Z_{flt}(k\tau)\phi - P(\tau)u(k\tau) \big| \Big( \big|Z_{flt}(k\tau)\phi \big|^p + \big|P(\tau)u(k\tau) \big|^p \Big).
    \end{aligned}
    \end{equation}
After applying the Strichartz estimate \eqref{Str_inhomo2} to $\mathcal{A}_1$, we use \eqref{A1_MVT_bdd} and H\"older's inequality to obtain
    $$
    \begin{aligned}
    \| \mathcal{A}_1 \|_{\ell^{q}(I_0; L^r)}
    \leq & C_{d,p}  \Big\| \big| Z_{flt}(n\tau)\phi - P(\tau)u(n\tau) \big| \Big( \big|Z_{flt}(n\tau)\phi \big|^p + \big|P(\tau)u(n\tau) \big|^p \Big) \Big\|_{\ell^{q_0'} (I_0; L^{r_0'} )} \\
    \leq & C_{d,p} T_{loc}^{1-\frac{dp}{4}} \big\| Z_{flt}(n\tau)\phi - P(\tau)u(n\tau) \big\|_{\ell^{q_0} (I_0; L^{r_0})} \\
    &\times \Big(  \big\| Z_{flt}(n\tau)\phi \big\|_{\ell^{q_0} (I_0; L^{r_0})}^p + \big\| P(\tau)u (n\tau) \big\|_{\ell^{q_0} (I_0; L^{r_0})}^p \Big).
    \end{aligned}
    $$
Using \eqref{NLS-bdd} and \eqref{Lie-bdd}, we get
    \begin{equation}\label{A1_bdd}
    \| \mathcal{A}_1 \|_{\ell^{q}(I_0; L^r)}
    \leq C_{d,p} T_{loc}^{1-\frac{dp}{4}} \|\phi\|_{L^2(\mathbb{R}^d)}^p
    \big\| Z_{flt}(n\tau)\phi - P(\tau)u(n\tau) \big\|_{\ell^{q_0} (I_0; L^{r_0})} .
    \end{equation}

Second, to estimate $\mathcal{A}_2$ and $\mathcal{A}_5$, we use \eqref{MVT-1} and the Strichartz estimate \eqref{Str_inhomo2} again to get
    $$
    \begin{aligned}
    \| \mathcal{A}_2 \|_{\ell^{q}(I_0; L^r)}
    \leq & C_{d}  \Big\| \big| P(\tau)u(n\tau) - P(\widetilde{\tau})u(n\tau) \big| \Big( \big|P(\tau)u(n\tau)\big|^p + \big|P(\widetilde{\tau})u(n\tau)\big|^p \Big) \Big\|_{\ell^{q_0'} (I_0; L^{r_0'} )} \\
    \leq & C_{d} T_{loc}^{1-\frac{dp}{4}} \big\| P(\tau)u(n\tau) - P(\widetilde{\tau})u(n\tau) \big\|_{\ell^{q_0} (I_0; L^{r_0})} \\
    & \times \Big(  \big\| P(\tau)u(n\tau) \big\|_{\ell^{q_0} (I_0; L^{r_0})}^p + \big\| P(\widetilde{\tau})u(n\tau) \big\|_{\ell^{q_0} (I_0; L^{r_0})}^p \Big).
    \end{aligned}
    $$
Using \eqref{multiplier_3}
\footnote{
Since $\tau\leq \widetilde{\tau}$ and \eqref{multiplier_3}, we have a trivial bound
    $$
    \| P(\widetilde{\tau})u(n\tau) \|_{L^{r_0}(\mathbb{R}^d)}
    =\| P(\widetilde{\tau})P(\tau)u(n\tau) \|_{L^{r_0}(\mathbb{R}^d)}
    \leq C\| P(\tau)u(n\tau) \|_{L^{r_0}(\mathbb{R}^d)}.
    $$
}
and \eqref{NLS-bdd}, we get
    \begin{equation}\label{A2_bdd}
    \begin{split}
    \| \mathcal{A}_2 \|_{\ell^{q}(I_0; L^r)}
    \leq C_{d,p} T_{loc}^{1-\frac{dp}{4}} \|\phi\|_{L^2(\mathbb{R}^d)}^p
    \big\| P(\tau) u(n\tau) - P(\widetilde{\tau})u(n\tau) \big\|_{\ell^{q_0} (I_0; L^{r_0})}.
    \end{split}
    \end{equation}
Similarly, from the Strichartz estimate \eqref{Str_inhomo1}, \eqref{multiplier_3}, and \eqref{NLS-bdd}, we have
    \begin{equation}\label{A5_bdd}
    \begin{split}
    \| \mathcal{A}_5 \|_{\ell^{q}(I_0; L^r)}
    \leq C_{d,p} T_{loc}^{1-\frac{dp}{4}} \|\phi\|_{L^2(\mathbb{R}^d)}^p
    \big\| u - P(\widetilde{\tau})u \big\|_{L^{q_0} (I_0; L^{r_0})}.
    \end{split}
    \end{equation}

Next, to estimate $\mathcal{A}_3$, we use the Strichartz estimate \eqref{Str_inhomo2} and Mean Value Theorem \eqref{MVT-2} that yield
    \begin{equation}\label{A3_bdd2}
    \begin{aligned}
    \| \mathcal{A}_3 \|_{\ell^{q}(I_0; L^r)}
    &\leq C_d \bigg\| \frac{N(\tau) - I}{\tau} P(\widetilde{\tau}) u(n\tau) -i\lambda | P(\widetilde{\tau}) u(n\tau) |^p P(\widetilde{\tau})u(n\tau) \bigg\|_{\ell^{1} (I_0; L^2)} \\
    &\leq C_d\tau \big\| | P(\widetilde{\tau}) u(n\tau)|^{2p+1} \big\|_{\ell^{1} (I_0; L^2)}.
    \end{aligned}
    \end{equation}
Now, consider the admissible pair $(q_1,r_1)$, given by
    \begin{equation}\label{admisible_q1r1}
    \frac{1}{r_1} = \frac{p+1}{2(2p+1)}
    \quad\mbox{and}\quad
    \frac{1}{q_1} = \frac{dp}{4(2p+1)}.
    \end{equation}
We remark that the pair $(q_1,r_1)$ satisfies the admissible condition \eqref{admissible}.
From \eqref{multiplier_1}, \eqref{multiplier_3} and \eqref{NLS-bdd}, we have
    \begin{equation}\label{A3_bdd}
    \begin{aligned}
    \| \mathcal{A}_3 \|_{\ell^{q}(I_0; L^r)}
    &\leq C_d \tau \big\| P(\widetilde{\tau}) u(n\tau) \big\|_{\ell^{2p+1} (I_0; L^{2(2p+1)})}^{2p+1} \\
    &\leq C_d \tau T_{loc}^{(\frac{1}{2p+1}-\frac{1}{q_1})(2p+1)} \widetilde{\tau}^{\frac{d}{2}(\frac{1}{2(2p+1)}-\frac{1}{r_1}) (2p+1)} \big\| P(\widetilde{\tau}) u(n\tau) \big\|_{\ell^{q_1} (I_0; L^{r_1})}^{2p+1} \\
    &\leq C_d T_{loc}^{\frac{4-dp}{4} } \big(\tau/\widetilde\tau^{\frac{dp}{4}}\big) \big\| P(\tau) u(n\tau) \big\|_{\ell^{q_1} (I_0; L^{r_1})}^{2p+1} \\
    &\leq C_{d,p} T_{loc}^{\frac{4-dp}{4} } \frac{\tau}{\widetilde\tau} \| \phi \|_{L^2(\mathbb{R}^d)}^{2p+1} .
    \end{aligned}
    \end{equation}

Lastly, we consider the estimate of $\mathcal{A}_4$.
For simplicity, let us write
    $$
    \Phi(\cdot,s) = |P(\widetilde{\tau})u(s)|^p P(\widetilde{\tau})u(s).
    $$
By the Fundamental Theorem of Calculus, $\mathcal{A}_4$ becomes
    $$
    \begin{aligned}
    \mathcal{A}_4
    &= \tau \sum_{k=0}^{n-1} S(n \tau -k \tau)P(\tau) \Phi(\cdot,k\tau) - \int_0^{n\tau} S(n\tau -s)P(\tau) \Phi(\cdot,s) ds \\
    &= \sum_{k=0}^{n-1} \int_{k\tau}^{(k+1)\tau} \Big( S(n \tau -k \tau)P(\tau) \Phi(\cdot,k\tau) - S(n\tau -s)P(\tau) \Phi(\cdot,s) \Big) ds \\
    &= \sum_{k=0}^{n-1} \int_{k\tau}^{(k+1)\tau} \int_{k\tau}^{s} \partial_t \big( S(n \tau -t)P(\tau) \Phi(\cdot,t) \big) dt ds.
    \end{aligned}
    $$
Since
    $$
    \begin{aligned}
    \partial_t \big( S(n \tau- t)P(\tau) \Phi(\cdot,t) \big)^{\wedge}(\xi)
    &= \partial_t \Big( e^{i(n \tau- t)|\xi|^2} \chi(\tau^{\frac{1}{2}}\xi) \widehat{\Phi}(\xi,t) \Big)\\
    &= -i|\xi|^2 e^{i(n \tau- t)|\xi|^2} \chi(\tau^{\frac{1}{2}}\xi) \widehat{\Phi}(\xi,t) + e^{i(n \tau- t)|\xi|^2} \chi(\tau^{\frac{1}{2}}\xi)
    \widehat{\partial_t \Phi}(\xi,t),
    \end{aligned}
    $$
we have the identity
    $$
    \partial_t \big( S(n \tau -t)P(\tau) \Phi(\cdot,t) \big)
    = S(n \tau- t)P(\tau) ( \partial_t -i\Delta ) \Phi(\cdot,t).
    $$
Now, using this identity and integrating with respect to $s$, we have
    $$
    \begin{aligned}
    \mathcal{A}_4
    &=-i \sum_{k=0}^{n-1} \int_{k\tau}^{(k+1)\tau} \int_{k\tau}^s \big( S(n \tau- t)P(\tau) ( i\partial_t +\Delta ) \Phi(\cdot,t) \big) dtds \\
    &=-i \sum_{k=0}^{n-1} \int_{k\tau}^{(k+1)\tau} \big((k+1)\tau-t \big) \big( S(n \tau- t)P(\tau) ( i\partial_t +\Delta ) \Phi(\cdot,t) \big) dt \\
    &=-i \int_{0}^{n\tau} \Big( \sum_{k=0}^{n-1} \mathbf{1}_{(k\tau,(k+1)\tau)}(t)
    \big((k+1)\tau-t \big) \Big) \big( S(n \tau- t)P(\tau) ( i\partial_t +\Delta) \Phi(\cdot,t) \big) dt .
    \end{aligned}
    $$
Letting
    $$
    \Psi(t)
    = \sum_{k=0}^{\infty} \mathbf{1}_{(k\tau,(k+1)\tau)}(t) \big((k+1)\tau-t \big),
    $$
we note that $|\Psi(t)|\leq \tau$ for all $0\leq t<\infty$.
Thus, the Strichartz estimate \eqref{Str_inhomo1} gives
    \begin{equation}\label{A4_bdd2}
    \begin{aligned}
    \big\| \mathcal{A}_4 \big\|_{\ell^{q}(I_0; L^r)}
    &= \Big\| \int_{0}^{n\tau} S(n \tau- t)P(\tau) \Psi(t) \big( i\partial_t +\Delta \big) P(\tau) \Phi(\cdot,t) dt \Big\|_{\ell^{q}(I_0; L^r)}\\
    &\leq C_d \big\| P(\tau) \Psi(t) \big( i\partial_t +\Delta \big) \Phi(\cdot,t) \big\|_{L^1 (I_0; L^2)} \\
    &\leq C_d \tau \big\| P(\tau) \big(i\partial_t +\Delta \big) \big( |P(\widetilde{\tau})u|^p P(\widetilde{\tau})u \big) \big\|_{L^1(I_0; L^2)}.
    \end{aligned}
    \end{equation}
Here, using the identity
    $$
    i\partial_t u = -\Delta u + \lambda |u|^p u,
    $$
a direct calculation gives
    $$
    \begin{aligned}
    &(i\partial_t +\Delta) \big( |P(\widetilde{\tau})u|^p P(\widetilde{\tau})u \big) \\
    &\quad= \big( p |P(\widetilde{\tau})u|^{p-1} P(\widetilde{\tau})u + |P(\widetilde{\tau})u|^p \big) \big(i\partial_t P(\widetilde{\tau})u \big) + \Delta \big( |P(\widetilde{\tau})u|^{p} P(\widetilde{\tau})u \big) \\
    &\quad= \big( p |P(\widetilde{\tau})u|^{p-1} P(\widetilde{\tau})u + |P(\widetilde{\tau})u|^p \big) \Big(-\Delta P(\widetilde{\tau})u + P(\widetilde{\tau}) \big( \lambda |u|^p u\big) \Big)
    +\Delta \big( |P(\widetilde{\tau})u|^{p} P(\widetilde{\tau})u \big) .
    \end{aligned}
    $$
Thus, we obtain
    \begin{equation}\label{A4_bdd3}
    \begin{aligned}
    &\big\| P(\tau) (i\partial_t +\Delta) \big( |P(\widetilde{\tau})u|^p P(\widetilde{\tau})u \big) \big\|_{L^1(I_0; L^2)} \\
    &\qquad\leq C_p \big\| |P(\widetilde{\tau})u|^p (\Delta P(\widetilde{\tau})u) \big\|_{L^1(I_0; L^2)}
        + C_p \big\| |P(\widetilde{\tau})u|^{p} P(\widetilde{\tau})(|u|^p u) \big\|_{L^1(I_0; L^2)} \\
    &\qquad\qquad + C_p \big\| \Delta P(\tau) \big( |P(\widetilde{\tau})u|^p P(\widetilde{\tau})u \big) \big\|_{L^1(I_0; L^2)}.
    \end{aligned}
    \end{equation}
For a further estimate of \eqref{A4_bdd3}, we will use a similar argument as in the estimate of $\mathcal{A}_3$.
From \eqref{multiplier_2} and \eqref{multiplier_3}, the first term in the right hand side of \eqref{A4_bdd3} is estimated as
    $$
    \begin{aligned}
    \big\| | P(\widetilde{\tau}) u|^{p} (\Delta P(\widetilde{\tau}) u) \big\|_{L^{1}(I_0; L^{2})}
    &\leq \big\| | P(\widetilde{\tau}) u|^{p} \big\|_{L^{\frac{p+1}{p}}(I_0; L^{\frac{2(p+1)}{p}})} \big\| \Delta P(\widetilde{\tau}) u \big\|_{L^{p+1}(I_0; L^{2(p+1)})} \\
    &\leq C_{d,p} \frac{1}{\widetilde{\tau}} \big\| P(\widetilde{\tau}) u \big\|_{L^{p+1}(I_0; L^{2(p+1)})}^{p+1} \\
    &\leq C_{d,p} \frac{1}{\widetilde{\tau}} T_{loc}^{\frac{4-dp}{4}} \| u \|_{L^{\frac{4(p+1)}{dp}}(I_0; L^{2(p+1)})}^{p+1} .
    \end{aligned}
    $$
Also, from \eqref{multiplier_1}, the second term in \eqref{A4_bdd3} is controlled as
    $$
    \begin{aligned}
    &\big\| |P(\widetilde{\tau}) u|^p P(\widetilde{\tau}) \big(|u|^p u \big) \big\|_{L^{1}(I_0; L^{2})} \\
    &\qquad\leq \big\| P(\widetilde{\tau}) u \big\|_{L^{2p+1}(I_0; L^{2(2p+1)})}^p \big\| P(\widetilde{\tau}) \big(|u|^p u \big) \big\|_{L^{\frac{2p+1}{p+1}}(I_0; L^{\frac{2(2p+1)}{p+1}})} \\
    &\qquad\leq C_{d,p} \widetilde{\tau}^{\frac{d}{2} (\frac{1}{2(2p+1)}-\frac{1}{r_1})p } \| u \|_{L^{2p+1}(I_0; L^{r_1})}^p \widetilde{\tau}^{\frac{d}{2} (\frac{1}{2(2p+1)} - \frac{1}{r_1} )(p+1) } \big\| |u|^p u \big\|_{L^{\frac{2p+1}{p+1}}(I_0; L^{\frac{r_1}{p+1}})} \\
    &\qquad\leq C_{d,p} \widetilde\tau^{-\frac{dp}{4}} T_{loc}^{\frac{4-dp}{4}} \| u\|_{L^{q_1}(I_0; L^{r_1})}^{2p+1} ,
    \end{aligned}
    $$
where the pair $(q_1,r_1)$ is as in \eqref{admisible_q1r1}.
Lastly, from \eqref{multiplier_2} and the fractional chain rule, the third term in \eqref{A4_bdd3} is controlled as
    $$
    \begin{aligned}
    \big\| \Delta P(\tau) \big( |P(\widetilde{\tau})u|^p P(\widetilde{\tau})u \big) \big\|_{L^{1}(I_0; L^{2})}
    &\leq C_{d} \tau^{-\frac{1}{2}} \big\| \nabla \big( |P(\widetilde{\tau})u|^p P(\widetilde{\tau})u \big) \big\|_{L^{1}(I_0; L^{2})} \\
    &\leq C_{d,p} \tau^{-\frac{1}{2}} \big\| |P(\widetilde{\tau})u|^p \big( \nabla P(\widetilde{\tau})u \big)
    \big\|_{L^{1}(I_0; L^{2})} \\
    &\leq C_{d,p} \tau^{-\frac{1}{2}} \big\| | P(\widetilde{\tau}) u|^{p} \big\|_{L^{\frac{p+1}{p}}(I_0; L^{\frac{2(p+1)}{p}})} \big\| \nabla P(\widetilde{\tau}) u \big\|_{L^{p+1}(I_0; L^{2(p+1)})} \\
    &\leq C_{d,p} (\tau \widetilde{\tau})^{-\frac{1}{2}} \big\| P(\widetilde{\tau}) u \big\|_{L^{p+1}(I_0; L^{2(p+1)})}^{p+1} \\
    &\leq C_{d,p} (\tau \widetilde{\tau})^{-\frac{1}{2}} T_{loc}^{\frac{4-dp}{4}} \| u \|_{L^{\frac{4(p+1)}{dp}}(I_0; L^{2(p+1)})}^{p+1} .
    \end{aligned}
    $$
At this stage, let the constant $c_{d,p}>0$ in the definition of $T_{loc}$ be chosen so small that the local well-posedness \eqref{NLS-bdd} holds.
Since the pairs $(q_1,r_1)$ and $(\frac{4(p+1)}{dp},2(p+1))$ are admissible
\footnote{If we consider the higher dimensional case $d\geq 5$, this need an additional condition $p\leq \frac{2}{d-2}$.},
it now follows from \eqref{NLS-bdd} that
    \begin{equation}\label{A4_bdd}
    \begin{aligned}
    \| \mathcal{A}_4 \|_{\ell^{q}(I_0; L^r)}
    \leq C_{d,p} T_{loc}^{\frac{4-dp}{4}} \big( \| \phi \|_{L^2(\mathbb{R}^d)}^{p}
    + \| \phi \|_{L^2(\mathbb{R}^d)}^{2p} \big) \Big(\frac{\tau}{\widetilde{\tau}}\Big)^{\frac{1}{2}}  \| \phi \|_{L^2(\mathbb{R}^d)}.
    \end{aligned}
    \end{equation}

In all, applying \eqref{A1_bdd}, \eqref{A2_bdd}, \eqref{A5_bdd}, \eqref{A3_bdd}, and \eqref{A4_bdd} to \eqref{triangle_bdd}, we obtain
    \begin{equation}\label{key_estimate2}
    \begin{aligned}
    &\big\| Z_{flt}(n\tau)\phi  - P(\tau)u(n\tau) \big\|_{\ell^{q}(I_0; L^r)} \\
    &\quad\leq C_{d,p}  T_{loc}^{\frac{4-dp}{4}} \| \phi \|_{L^2(\mathbb{R}^d)}^{p}
    \big\| Z_{flt}(n\tau)\phi - P(\tau)u(n\tau) \big\|_{\ell^{q_0} (I_0; L^{r_0})} \\
    &\qquad + C_{d,p}  T_{loc}^{\frac{4-dp}{4}} \| \phi \|_{L^2(\mathbb{R}^d)}^{p}
    \bigg( \big\| P(\tau)u(n\tau) - P(\widetilde{\tau})u(n\tau) \big\|_{\ell^{q_0}(I_0; L^{r_0})}
    + \big\| u - P(\widetilde{\tau})u \big\|_{L^{q_0}(I_0; L^{r_0})} \bigg) \\
    &\qquad + C_{d,p} T_{loc}^{\frac{4-dp}{4}} \big( \| \phi \|_{L^2(\mathbb{R}^d)}^{p} + \| \phi \|_{L^2(\mathbb{R}^d)}^{2p} \big)
    \Big(\frac{\tau}{\widetilde{\tau}}\Big)^{\frac{1}{2}} \|\phi\|_{L^2(\mathbb{R}^d)}.
    \end{aligned}
    \end{equation}
Here, we choose the constant $c_{d,p} >0$ in the definition of $T_{loc}$ even so small that
    $$
    C_{d,p}  T_{loc}^{\frac{4-dp}{4}} \| \phi \|_{L^2(\mathbb{R}^d)}^{p}
    =C_{d,p}c_{d,p}^\frac{4-dp}{4}  \leq \frac{1}{4}.
    $$
Then the inequality \eqref{key_estimate2} with $(q,r)=(q_0,r_0)$ reduces to
    \begin{equation}\label{key_estimate3}
    \begin{aligned}
    &\big\| Z_{flt}(n\tau)\phi - P(\tau)u(n\tau) \big\|_{\ell^{q_0}(I_0; L^{r_0})} \\
    &\quad\leq \frac{1}{3} \Big( \big\| P(\tau)u(n\tau) - P(\widetilde{\tau})u(n\tau) \big\|_{\ell^{q_0}(I_0; L^{r_0})}
    + \big\| u - P(\widetilde{\tau})u \big\|_{L^{q_0}(I_0; L^{r_0})} \Big) \\
    &\qquad\quad + \frac{1}{3} \Big(\frac{\tau}{\widetilde{\tau}}\Big)^{\frac{1}{2}} \big( \|\phi\|_{L^2(\mathbb{R}^d)} +\|\phi\|_{L^2(\mathbb{R}^d)}^{p+1}\big).
    \end{aligned}
    \end{equation}
Inserting \eqref{key_estimate3} into \eqref{key_estimate2} again,
we finally obtain the desired estimate \eqref{key_estimate} for every admissible pair $(q,r)$.
\end{proof}

Lastly, we employ a standard argument to deduce the following property of the filtered Lie approximation $Z_{flt}$.

\begin{lem}\label{Z1-Z2_thm}
Let $d \geq 1$, $0< p < \frac{4}{d}$, and $\phi_1, \phi_2 \in L^2(\mathbb{R}^d)$.
Then there exists a constant $c_{d,p}>0$ such that, with $T_{loc}=c_{d,p} \min\big\{ \|\phi_1\|_{L^2(\mathbb{R}^d)}, \|\phi_2\|_{L^2(\mathbb{R}^d)} \big\}^{-\frac{4p}{4-dp}}$ and  $I_0= [0,T_{loc}]$,
    \begin{equation}\label{Z1-Z2_bdd}
    \big\| Z_{flt}(n\tau)\phi_1 - Z_{flt}(n\tau)\phi_2 \big\|_{\ell^{q}(I_0; L^{r}(\mathbb{R}^d))}
    \leq C_d \|\phi_1 - \phi_2\|_{L^2(\mathbb{R}^d)}
    \end{equation}
for any $0<\tau<1$ and admissible pair $(q,r)$.
\end{lem}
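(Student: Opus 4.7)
The plan is to parallel the treatment of the term $\mathcal{A}_1$ in the proof of Proposition \ref{local_main_thm}. Setting $D(n\tau) := Z_{flt}(n\tau)\phi_1 - Z_{flt}(n\tau)\phi_2$, I subtract the discrete Duhamel formula \eqref{Duhamel_Z} applied to $\phi_1$ and $\phi_2$ to obtain
\begin{equation*}
D(n\tau) = S(n\tau) P(\tau) (\phi_1 - \phi_2) + \tau \sum_{k=0}^{n-1} S((n-k)\tau) P(\tau) \left( \frac{N(\tau)-I}{\tau} Z_{flt}(k\tau)\phi_1 - \frac{N(\tau)-I}{\tau} Z_{flt}(k\tau)\phi_2 \right).
\end{equation*}
For any admissible pair $(q,r)$, the homogeneous estimate in \eqref{Str_homo} bounds the linear term by $C_d \|\phi_1 - \phi_2\|_{L^2}$, which is already the desired form of the right-hand side of \eqref{Z1-Z2_bdd}.

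For the nonlinear sum I would apply the inhomogeneous discrete Strichartz estimate \eqref{Str_inhomo2} with the dual exponents $(q_0', r_0')$ of the admissible pair from \eqref{admisible_q0r0}. The pointwise Mean Value Theorem bound \eqref{MVT-1}, combined with H\"older's inequality in space and in time --- using the identities $\frac{1}{r_0'} = \frac{p+1}{r_0}$ and $\frac{1}{q_0'} - \frac{p+1}{q_0} = \frac{4-dp}{4}$, from which a factor of $T_{loc}^{(4-dp)/4}$ is extracted --- yields
\begin{equation*}
\|D(n\tau)\|_{\ell^{q}(I_0; L^r)} \leq C_d \|\phi_1 - \phi_2\|_{L^2} + C_{d,p} T_{loc}^{\frac{4-dp}{4}} \bigl( \|Z_{flt}(n\tau)\phi_1\|_{\ell^{q_0}(I_0; L^{r_0})}^p + \|Z_{flt}(n\tau)\phi_2\|_{\ell^{q_0}(I_0; L^{r_0})}^p \bigr) \|D(n\tau)\|_{\ell^{q_0}(I_0; L^{r_0})}.
\end{equation*}
By the local well-posedness bound \eqref{Lie-bdd} applied to each $\phi_j$ on the common interval $I_0 = [0, T_{loc}]$, each factor $\|Z_{flt}(n\tau)\phi_j\|_{\ell^{q_0}(I_0; L^{r_0})}$ is at most $C_d \|\phi_j\|_{L^2}$, so the nonlinear prefactor $C_{d,p} T_{loc}^{(4-dp)/4} \|\phi_j\|_{L^2}^p$ collapses, by the definition of $T_{loc}$, to a constant of the form $C_{d,p} c_{d,p}^{(4-dp)/4}$.

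Shrinking $c_{d,p}$ if necessary so that this constant is at most $\frac{1}{4}$, I first take $(q,r) = (q_0, r_0)$ and absorb the term involving $\|D\|_{\ell^{q_0}(I_0; L^{r_0})}$ on the right into the left, obtaining $\|D\|_{\ell^{q_0}(I_0; L^{r_0})} \leq C_d \|\phi_1 - \phi_2\|_{L^2}$. Substituting this back into the display above for general admissible $(q,r)$ yields \eqref{Z1-Z2_bdd}. The argument is essentially routine once Proposition \ref{local_main_thm} is in hand, since it is precisely the $\mathcal{A}_1$ step of that proof; the only mildly delicate point is to verify that the local well-posedness bound \eqref{Lie-bdd} can be invoked simultaneously for both $\phi_1$ and $\phi_2$ on the same interval $[0, T_{loc}]$, which is exactly what the choice of $T_{loc}$ in terms of the two $L^2$-norms ensures.
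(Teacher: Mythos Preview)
Your argument is step-for-step the same as the paper's: subtract the two Duhamel formulas \eqref{Duhamel_Z}, apply \eqref{Str_homo} and \eqref{Str_inhomo2}, use the pointwise bound \eqref{MVT-1} with H\"older, insert \eqref{Lie-bdd}, shrink $c_{d,p}$, close at $(q_0,r_0)$, and feed back for general $(q,r)$.

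One point deserves more care, and it is present in the paper's statement as well. With $T_{loc}=c_{d,p}\min\{\|\phi_1\|_{L^2},\|\phi_2\|_{L^2}\}^{-\frac{4p}{4-dp}}$, the interval $I_0$ is the \emph{longer} of the two local intervals coming from Theorem~\ref{thm_LWP}, since $x\mapsto x^{-\frac{4p}{4-dp}}$ is decreasing. Hence \eqref{Lie-bdd} is not guaranteed on $I_0$ for the datum with the larger $L^2$ norm, and your claim that $T_{loc}^{(4-dp)/4}\|\phi_j\|_{L^2}^p$ collapses to $c_{d,p}^{(4-dp)/4}$ fails for that $j$: one gets instead $c_{d,p}^{(4-dp)/4}\bigl(\|\phi_j\|_{L^2}/\min\{\|\phi_1\|_{L^2},\|\phi_2\|_{L^2}\}\bigr)^p$, which need not be bounded. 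The intended (and correct) formulation uses $\max$ in place of $\min$; with that change both your argument and the paper's go through verbatim, and the application in Section~\ref{sec_global} is unaffected since there both data have $L^2$ norm at most $\|\phi\|_{L^2}$. Your closing sentence, where you assert that ``the choice of $T_{loc}$ \dots\ ensures'' simultaneous applicability of \eqref{Lie-bdd}, is precisely where you should have caught this.
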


\begin{proof}
Let
    $$
    T_{loc}=c_{d,p} \min\big\{ \|\phi_1\|_{L^2(\mathbb{R}^d)}, \|\phi_2\|_{L^2(\mathbb{R}^d)} \big\}^{-\frac{4p}{4-dp}}
    \quad\mbox{and}\quad  I_0= [0,T_{loc}],
    $$
where $c_{d,p}>0$ is a constant to be determined below.
Let $(q_0,r_0)$ be the admissible pair, given by (\ref{admisible_q0r0}).
Using Duhamel's formula \eqref{Duhamel_Z} and the Strichartz estimates \eqref{Str_homo} and \eqref{Str_inhomo2}, we get
    \begin{equation}\label{Z1-Z2_bdd-2}
    \begin{aligned}
    &\big\| Z_{flt}(n\tau)\phi_1 - Z_{flt}(n\tau)\phi_2 \big\|_{\ell^{q}(I_0; L^{r})} \\
    &\quad\leq C_d \|\phi_1 - \phi_2\|_{L^2(\mathbb{R}^d)}
    + C_d \Big\| \frac{N(\tau) - I}{\tau} Z_{flt}(n\tau)\phi_1 - \frac{N(\tau) - I}{\tau} Z_{flt}(n\tau)\phi_2 \Big\|_{\ell^{q_0'}(I_0; L^{r_0'})}
    \end{aligned}
    \end{equation}
for any admissible pair $(q,r)$. Now, we choose the constant $c_{d,p}>0$ so small that Theorem \ref{thm_LWP} holds.
Then, from \eqref{MVT-1} and \eqref{Lie-bdd}, the second term in the right hand side of \eqref{Z1-Z2_bdd-2} is estimated as
    \begin{equation*}%\label{Z1-Z2_bdd-3}
    \begin{aligned}
    C_d \Big\|\frac{N(\tau) - I}{\tau} &  Z_{flt}(n\tau)\phi_1 - \frac{N(\tau) - I}{\tau} Z_{flt}(n\tau)\phi_2 \Big\|_{\ell^{q_0'}(I_0; L^{r_0'})} \\
    \leq &\, C_{d,p} \big\| \big( Z_{flt}(n\tau)\phi_1 - Z_{flt}(n\tau)\phi_2 \big) \big( |Z_{flt}(n\tau)\phi_1|^p + |Z_{flt}(n\tau)\phi_2|^p \big) \big\|_{\ell^{q_0'}(I_0; L^{r_0'})} \\
    \leq &\, C_{d,p} T_{loc}^{1-\frac{dp}{4}} \big\| Z_{flt}(n\tau)\phi_1 - Z_{flt}(n\tau)\phi_2 \big\|_{\ell^{q_0} (I_0; L^{r_0})} \\
    &\times \Big(  \big\| Z_{flt}(n\tau)\phi_1 \big\|_{\ell^{q_0} (I_0; L^{r_0})}^p + \big\| Z_{flt}(n\tau)\phi_2 \big\|_{\ell^{q_0} (I_0; L^{r_0})}^p \Big) \\
    \leq &\, C_{d,p} T_{loc}^{1-\frac{dp}{4}} \big\| Z_{flt}(n\tau)\phi_1 - Z_{flt}(n\tau)\phi_2 \big\|_{\ell^{q_0} (I_0; L^{r_0})} \Big(\|\phi_1\|^p_{L^2(\mathbb{R}^d)}+\|\phi_2\|^p_{L^2(\mathbb{R}^d)} \Big).
    \end{aligned}
    \end{equation*}
Again, we choose the constant $c_{d,p}>0$ even so small that
$$
C_{d,p}c_{d,p}^{1-\frac{dp}{4}}\le\frac{1}{4}.
$$
Then we have
    \begin{equation}\label{Z1-Z2_bdd-4}
    \begin{aligned}
    &\big\| Z_{flt}(n\tau)\phi_1 - Z_{flt}(n\tau)\phi_2 \big\|_{\ell^{q}(I_0; L^{r})} \\
    &\qquad\leq C_d \|\phi_1 - \phi_2\|_{L^2(\mathbb{R}^d)}
    + \frac{1}{2} \big\| Z_{flt}(n\tau)\phi_1 - Z_{flt}(n\tau)\phi_2 \big\|_{\ell^{q_0} (I_0; L^{r_0})}
    \end{aligned}
    \end{equation}
for any admissible pair $(q,r)$.

Taking $(q,r)=(q_0,r_0)$ in \eqref{Z1-Z2_bdd-4}, we arrive at \eqref{Z1-Z2_bdd} for the pair $(q,r)=(q_0,r_0)$.
Applying \eqref{Z1-Z2_bdd} for the pair $(q,r)=(q_0,r_0)$ to \eqref{Z1-Z2_bdd-4},
we obtain the desired inequality \eqref{Z1-Z2_bdd} for any admissible pair $(q,r)$.
\end{proof}

%%%%%%%%%%%%%%%%%%%%%%%%%%%%%%%%%%%%%%%%%%%%%%%%%%%%%%%%%%%%%%%%%%%%%%%%%%%%%%%%%%%%%%%%%%%%%

\section{Global-in-time error estimates}\label{sec_global}

In this section, we prove the main results of the paper, Theorem \ref{main-thm}.
From now on, instead of $u(t)$, we shall denote by $u(t)\phi$ the solution of \eqref{main-equation} at time $t$, corresponding to the initial datum $\phi$.

To start the proof, assume that $1\leq d\leq3$, $0< p < \frac{4}{d}$, and $\phi\in L^2(\mathbb{R}^d)$.
Fix any $T>0$ and $0<\tau<1$.
Also, let $c_{d,p}>0$ be the smallest one of the two constants $c_{d,p}$ from Proposition \ref{local_main_thm}, and Lemma \ref{Z1-Z2_thm}.

We consider only the meaningful case that $0<\tau <\min\big\{ \frac{c_{d,p}}{2} \|\phi\|_{L^2(\mathbb{R}^d)}^{-\frac{4p}{4-dp}}, 1 \big\}$.
In this case, there is a constant $\widetilde{c}_{d,p} \in (\frac{c_{d,p}}{2}, c_{d,p})$ such that
    $$
    \frac{\widetilde{c}_{d,p}\|\phi\|_{L^2(\mathbb{R}^d)}^{-\frac{4p}{4-dp}} }{\tau} \in \mathbb{N}.
    $$
Then we define
    $$
    T_{loc}= \widetilde{c}_{d,p} \|\phi\|_{L^2(\mathbb{R}^d)}^{-\frac{4p}{4-dp}}>0
    $$
and choose an integer $k_0\ge0$ such that $k_0 T_{loc}< T \leq (k_0 +1) T_{loc}$.
If $T \leq T_{loc}$, Theorem \ref{main-thm} is trivial from the local-in-time error estimates. So we assume $T>T_{loc}$, that is, $k_0>0$.
Note that the interval $[0,T]$ is covered by the intervals $I_k := [kT_{loc}, (k+1)T_{loc}]$ ($k=0,1,\cdots,k_0$).
Denote $[0,\widetilde{T}] =\cup_{k=0}^{k_0} I_k$;
then $[0,T] \subseteq [0,\widetilde{T}]$.

To show \eqref{main-result}, we start with the following inequality
    \begin{equation}\label{global_start}
    \begin{aligned}
    &\max_{0\leq n\tau\leq T} \big\| Z_{flt}(n \tau)\phi - u(n\tau)\phi \big\|_{L^2(\mathbb{R}^d)} \\
    &\quad\leq \big\| u(n \tau)\phi - P(\tau)u(n\tau)\phi \big\|_{\ell^{\infty}([0,T]; L^2)}
    + \big\| Z_{flt}(n \tau)\phi - P(\tau)u(n\tau)\phi \big\|_{\ell^{\infty}([0,T]; L^2)} \\
    &\quad\leq \big\| u(t)\phi - P(\tau)u(t)\phi \big\|_{L^{\infty}([0,T]; L^2)}
    + \max_{k=0,\cdots,k_0} \big\| Z_{flt}(n \tau)\phi - P(\tau)u(n\tau)\phi \big\|_{\ell^{\infty}(I_k; L^2)} .
    \end{aligned}
    \end{equation}
For $k=1,\ldots,k_0$, let
    $$
    n_k=\frac{kT_{loc}}{\tau}\in\mathbb{N}
    $$
so that $n_k\tau=kT_{loc}$ is the right and left endpoints of $I_{k-1}$ and $I_k$, respectively.
Using the identities
\[
    u(m_1 \tau+m_2 \tau)\phi = u(m_1 \tau)u(m_2 \tau)\phi
    \quad\mbox{and}\quad
    Z_{flt}(m_1 \tau +m_2 \tau)\phi = Z_{flt}(m_1 \tau) Z_{flt}(m_2 \tau)\phi
\]
for all integers $m_1, m_2\ge 0$,
the $k$-th part of the second term in the right hand side of \eqref{global_start} is bounded by
    \begin{equation}\label{final_bdd}
    \begin{aligned}
    &\big\|Z_{flt}(n \tau)\phi - P(\tau)u(n \tau)\phi \big\|_{\ell^{\infty}(I_k; L^2)} \\
    &\quad\leq \big\| Z_{flt}(n \tau) Z_{flt}(n_k \tau)\phi - P(\tau)u(n \tau) u(n_k \tau)\phi \big\|_{\ell^{\infty}(I_0; L^2)} \\
    &\quad\leq \big\| Z_{flt}(n \tau) Z_{flt}(n_k \tau)\phi - Z_{flt}(n \tau) P(\tau)u(n_k \tau)\phi \big\|_{\ell^{\infty}(I_0; L^2)} \\
    &\qquad\qquad + \big\| Z_{flt}(n \tau) P(\tau)u(n_k \tau)\phi - P(\tau)u(n \tau) u(n_k \tau)\phi \big\|_{\ell^{\infty}(I_0; L^2)}
    \end{aligned}
    \end{equation}
for any integer $1\leq k \leq k_0$.

First, we consider the first term in the right hand side of \eqref{final_bdd} to which we shall apply Lemma \ref{Z1-Z2_thm} with $\phi_1 = Z_{flt}(n_k \tau)\phi$ and $\phi_2 = P(\tau)u(n_k \tau)\phi$.
Observe from the \textit{mass conservation law} and the frequency localization in $L^2$ that
    $$
    \| \phi_1 \|_{L^2(\mathbb{R}^d)}\leq \| \phi \|_{L^2(\mathbb{R}^d)}
    \quad\mbox{and}\quad
    \| \phi_2 \|_{L^2(\mathbb{R}^d)}\leq \| \phi \|_{L^2(\mathbb{R}^d)};
    $$
thus, we can apply Lemma \ref{Z1-Z2_thm} to the first term of \eqref{final_bdd} to get
    $$
    \begin{aligned}
    \big\| Z_{flt}(n \tau) Z_{flt}(n_k \tau)\phi - Z_{flt}(n \tau) P(\tau)u(n_k \tau)\phi \big\|_{\ell^{\infty}(I_0; L^2)}
    &\leq C_d \big\| Z_{flt}(n_k \tau)\phi - P(\tau)u(n_k \tau)\phi \big\|_{L^2(\mathbb{R}^d)} \\
    &\leq C_d \big\| Z_{flt}(n\tau)\phi - P(\tau)u(n\tau)\phi \big\|_{\ell^{\infty}(I_{k-1}; L^2)},
    \end{aligned}
    $$
and then \eqref{final_bdd} becomes
    \begin{equation}\label{final_bdd2}
    \begin{aligned}
    &\big\|Z_{flt}(n \tau)\phi - P(\tau)u(n \tau)\phi \big\|_{\ell^{\infty}(I_k; L^2)} \\
    &\quad \leq C_d \big\| Z_{flt}(n\tau)\phi - P(\tau)u(n\tau)\phi \big\|_{\ell^{\infty}(I_{k-1}; L^2)}
    + \Big\| Z_{flt}(n \tau) u(n_k \tau)\phi - P(\tau)u(n \tau) u(n_k \tau)\phi \Big\|_{\ell^{\infty}(I_0; L^2)}
    \end{aligned}
    \end{equation}
for any integer $1\leq k\leq k_0$.

We are now ready to apply standard induction arguments to \eqref{final_bdd2}.
For $k=1,\ldots,k_0,$ using Proposition \ref{local_main_thm} with the initial datum $\phi_k =u(n_k \tau)\phi$,
the second term in the right hand side of \eqref{final_bdd2} is bounded by
    \begin{equation}\label{final_bdd3}
    \begin{aligned}
    &\big\| Z_{flt} (n \tau) u(n_k \tau)\phi - P(\tau)u(n \tau) u(n_k \tau)\phi \big\|_{\ell^{\infty}(I_0; L^2)} \\
    &\quad \leq \big\| P(\tau)u(n \tau) \phi_k - P(\widetilde{\tau}) u(n\tau) \phi_k \big\|_{\ell^{q_0}(I_0; L^{r_0})}
    + \big\| u(t) \phi_k - P(\widetilde{\tau}) u(t) \phi_k \big\|_{L^{q_0}(I_0; L^{r_0})} \\
    &\qquad\quad + \Big(\frac{\tau}{\widetilde{\tau}}\Big)^{\frac{1}{2}}  \big( \|\phi_k \|_{L^2(\mathbb{R}^d)} +\|\phi_k \|_{L^2(\mathbb{R}^d)}^{p+1}\big)
    \end{aligned}
    \end{equation}
for any $0<\tau <\min\big\{ \frac{c_{d,p}}{2} \|\phi\|_{L^2(\mathbb{R}^d)}^{-\frac{4p}{4-dp}}, 1 \big\}$ and $\tau\leq \widetilde\tau<1$.
Also, by the Strichartz estimates \eqref{Str_homo}, \eqref{Str_inhomo1} and H\"older's inequality, the first term of the right hand side of \eqref{final_bdd3} is bounded by
    \begin{equation}\label{final_bdd4}
    \begin{aligned}
    &\big\| P(\tau)u(n \tau) \phi_k - P(\widetilde{\tau}) u(n\tau) \phi_k \big\|_{\ell^{q_0}(I_0; L^{r_0})} \\
    &\leq C_d \big\| \big( P(\tau) - P(\widetilde{\tau}) \big) \phi_k \big\|_{L^{2}(\mathbb{R}^d)}
    + C_d \big\| \big( P(\tau) - P(\widetilde{\tau}) \big) \big( |u(t) \phi_k|^p u(t) \phi_k \big) \big\|_{L^{q_0'}(I_0; L^{r_0'})} \\
    &\leq C_d \big\| \big( P(\tau) - P(\widetilde{\tau}) \big) u(t)\phi \big\|_{L^{\infty}([0,T]; L^{2})}
    + C_d T_{loc}^{1-\frac{dp}{4}}  \big\| \big( P(\tau) - P(\widetilde{\tau}) \big) \big( |u(t) \phi|^p u(t) \phi \big) \big\|_{L^{\frac{q_0}{p+1}}(I_k; L^{\frac{r_0}{p+1}})} .
    \end{aligned}
    \end{equation}
Similar as \eqref{final_bdd4}, the second term of the right hand side of \eqref{final_bdd3} is bounded by
    \begin{equation}\label{final_bdd5}
    \begin{aligned}
    &\big\| u(t) \phi_k - P(\widetilde{\tau}) u(t) \phi_k \big\|_{L^{q_0}(I_0; L^{r_0})} \\
    &\quad\leq C_d \big\| \big( 1 - P(\widetilde{\tau}) \big) u(t)\phi \big\|_{L^{\infty}([0,T]; L^{2})}
    + C_d T_{loc}^{1-\frac{dp}{4}}  \big\| \big( 1 - P(\widetilde{\tau}) \big) \big( |u(t) \phi|^p u(t) \phi \big) \big\|_{L^{\frac{q_0}{p+1}}(I_k; L^{\frac{r_0}{p+1}})} .
    \end{aligned}
    \end{equation}
Applying \eqref{final_bdd3}, \eqref{final_bdd4} and \eqref{final_bdd5} to \eqref{final_bdd2},
we have
    \begin{equation}\label{final_bdd6}
    \begin{aligned}
    \big\|Z_{flt}(n \tau)\phi - P(\tau)u(n \tau)\phi \big\|_{\ell^{\infty}(I_k; L^2)}
    \leq C_d \big\| Z_{flt}(n\tau)\phi - P(\tau)u(n\tau)\phi \big\|_{\ell^{\infty}(I_{k-1}; L^2)}
    + \mathcal{B}_1
    \end{aligned}
    \end{equation}
for any integer $1\leq k\leq k_0$, where
    $$
    \begin{aligned}
    \mathcal{B}_1
    &:= C_d \Big( \big\| \big( 1 - P(\widetilde{\tau}) \big) u(t)\phi \big\|_{L^{\infty}([0,T]; L^{2})}
    + \big\| \big( 1 - P(\tau) \big) u(t)\phi \big\|_{L^{\infty}([0,T]; L^{2})} \Big) \\
    &\qquad + C_d T_{loc}^{1-\frac{dp}{4}} \bigg( \big\| \big( 1 - P(\widetilde{\tau}) \big) \big( |u(t) \phi|^p u(t) \phi \big) \big\|_{L^{\frac{q_0}{p+1}}([0,\widetilde{T}]; L^{\frac{r_0}{p+1}})} \\
    &\qquad\qquad\quad + \big\| \big( 1 - P(\tau) \big) \big( |u(t) \phi|^p u(t) \phi \big) \big\|_{L^{\frac{q_0}{p+1}}([0,\widetilde{T}]; L^{\frac{r_0}{p+1}})} \bigg)
    + \Big(\frac{\tau}{\widetilde{\tau}}\Big)^{\frac{1}{2}} \big( \|\phi\|_{L^2(\mathbb{R}^d)} +\|\phi\|_{L^2(\mathbb{R}^d)}^{p+1}\big).
    \end{aligned}
    $$
Note also that $\big\| Z_{flt}(n \tau)\phi - P(\tau)u(n \tau)\phi \big\|_{\ell^{\infty}(I_0; L^2)}\le\mathcal{B}_1$.
Apply \eqref{final_bdd6} to the second term in the right hand side of \eqref{global_start},
we have
    $$
    \begin{aligned}
    &\max_{k=0,\cdots,k_0} \big\| Z_{flt}(n \tau)\phi - P(\tau)u(n \tau)\phi \big\|_{\ell^{\infty}(I_k; L^2)} \leq \sum_{k=0}^{k_0}(1+C_d)^k \mathcal{B}_1 \\
    &\qquad\qquad\qquad \leq \frac{(1+C_d)^{\frac{T}{T_{loc}} +1}}{C_d} \mathcal{B}_1
    =\frac{1+C_d}{C_d} \exp{\Big(\frac{\log (1+C_d)}{\widetilde{c}_{d,p}}T\|\phi\|^{\frac{4p}{4-dp}}_{L^2(\mathbb{R}^d)}\Big)} \mathcal{B}_1.
    \end{aligned}
    $$
Thus, the convergence estimate over the global time interval $[0,T]$ reduces to
    \begin{equation}\label{final_bdd7}
    \begin{aligned}
    &\max_{0\leq n\tau\leq T} \big\| Z_{flt}(n \tau)\phi - u(n\tau)\phi \big\|_{L^2(\mathbb{R}^d)} \\
    &\qquad\leq \big\| u(t)\phi - P(\tau)u(t)\phi \big\|_{L^{\infty}([0,T]; L^2)}
    + \frac{1+C_d}{C_d} \exp\big( C_{d,p}T \| \phi\|_{L^2(\mathbb{R}^d)}^{\frac{4p}{4-dp}} \big) \mathcal{B}_1 \\
    &\qquad\leq \frac{1+ 2C_d}{C_d} \exp\big( C_{d,p}T \| \phi\|_{L^2(\mathbb{R}^d)}^{\frac{4p}{4-dp}} \big) \mathcal{B}_1
    \end{aligned}
    \end{equation}
for all $0<\tau <\min\big\{ \frac{c_{d,p}}{2} \|\phi\|_{L^2(\mathbb{R}^d)}^{-\frac{4p}{4-dp}}, 1 \big\}$ and $\tau\leq \widetilde\tau<1$.

Since $\phi\in L^2(\mathbb{R}^d)$, the local well-posedness theory \eqref{NLS-bdd} gives the global well-posedness result
    $$
    u(t)\phi \in L^{\infty}([0,T]; L^{2})
    \quad\mbox{and}\quad
    |u(t)\phi|^p u(t)\phi \in L^{\frac{q_0}{p+1}}([0,\widetilde{T}]; L^{\frac{r_0}{p+1}}).
    $$
More preciesely, for the initial data $\phi_k=u(n_k\tau)\phi$ and the admissible pair $(q,r)$, Theorem \ref{thm_LWP} and the \textit{mass conservation law} give
    \begin{equation}\label{NLS_global_bdd}
    \begin{aligned}
    \| u(t)\phi \|_{L^{q}([0,\widetilde{T}]; L^{r})}
    \leq \sum_{k=0}^{k_0} \| u(t)\phi_k \|_{L^{q}(I_0; L^{r})}
    \leq \frac{\widetilde{T}}{T_{loc}} C_{d} \|\phi\|_{L^2(\mathbb{R}^d)}
    \leq C_{d,p} T \|\phi\|_{L^2(\mathbb{R}^d)}^{C_{d,p}} .
    \end{aligned}
    \end{equation}
Also, $|u(t)\phi|^p u(t)\phi \in L^{\frac{q_0}{p+1}}([0,\widetilde{T}]; L^{\frac{r_0}{p+1}})$ follows from H\"older's inequality.

Thus, by the Dominated Convergence Theorem, we have
    $$
    \big\| \big(1- P(\tau)\big) u(t)\phi \big\|_{L^{\infty}([0,T]; L^2)}
    + \big\| \big( 1 - P(\tau) \big) \big( |u(t) \phi|^p u(t) \phi \big) \big\|_{L^{\frac{q_0}{p+1}}([0,\widetilde{T}]; L^{\frac{r_0}{p+1}})}
    \rightarrow 0
    \quad\mbox{as}\quad \tau\rightarrow 0 ,
    $$
and
    $$
    \big\| \big(1- P(\widetilde{\tau})\big) u(t)\phi \big\|_{L^{\infty}([0,T]; L^2)}
    + \big\| \big( 1 - P(\widetilde{\tau}) \big) \big( |u(t) \phi|^p u(t) \phi \big) \big\|_{L^{\frac{q_0}{p+1}}([0,\widetilde{T}]; L^{\frac{r_0}{p+1}})}
    \rightarrow 0
    \quad\mbox{as}\quad \widetilde{\tau}\rightarrow 0 .
    $$
Putting $\widetilde{\tau}=\tau^{\frac{1}{2}}$
and taking $\tau \rightarrow 0$ in \eqref{final_bdd7}, then $\mathcal{B}_1$ converges to zero.
That means, we obtain the convergence \eqref{main-result}.

%%%%%%%%%%%%%%%%%%%%%%%%%%%%%%%%%%%%%%%%%%%%%%%%%%%%%%%%%%%%%%%%%%%%%%%%%%%%%%%%%

\section{Proof of Theorem \ref{main-thm3}}\label{sec_radial}

In this section, we establish the convergence estimate (\ref{main-result5})  for $L^2$ radial initial data.
In the first subsection, we begin with some preparations. Then we give a proof of Theorem \ref{main-thm3} in the second subsection.

\subsection{Preliminaries}
Let $\widetilde{P}$ be the frequency localized operator, given by
    $$
    \widetilde{P}(\widetilde{\tau}) = P(\widetilde{\tau}) -P(4\widetilde{\tau}),
    $$
where $P$ is the frequency localized multiplier in (\ref{freq-loc-oper}).
Then the Fourier transform of $\widetilde{P}(\widetilde{\tau})\phi$ is supported on $B(0,\widetilde{\tau}^{-\frac{1}{2}})\setminus B(0,\frac{1}{4}\widetilde{\tau}^{-\frac{1}{2}})$,
and
    \begin{equation}\label{part_unit}
    \sum_{k\in\mathbb{Z}} \widetilde{P}(4^{-k} \widetilde{\tau}) \phi(x) = \phi(x).
    \end{equation}

\begin{theoremalpha}[Strichartz estimates for radial initial data]\label{thm-str-rad}
Let $d\geq 2$.
Then there exists a constant $C_{d}>0$ such that
    \begin{equation}\label{Str_homo-rad}
    \big\| S(t) \widetilde{P}(1) \phi \big\|_{L^q (\mathbb{R}; L^r (\mathbb{R}^d))}
    \leq C_{d} \|\phi\|_{L^2 (\mathbb{R}^d)}
    \end{equation}
for all pairs $(q,r)$ with
    \begin{equation}\label{radial_pair}
    \frac{2}{q} +\frac{2d-1}{r} \leq \frac{2d-1}{2},
    \quad q\geq 2,\quad\mbox{and}\quad(q,r)\neq \Big(2,\frac{4d-2}{2d-3} \Big),
    \end{equation}
    and radial functions $\phi\in L^2 (\mathbb{R}^d)$.
\end{theoremalpha}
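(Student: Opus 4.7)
The plan is to follow the classical strategy for Strichartz-type estimates: first derive an improved dispersive decay for frequency-localized radial data via Bessel-function asymptotics, and then bootstrap to the $(q,r)$-bound via a $TT^{\ast}$ argument combined with Hardy--Littlewood--Sobolev.

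First I would exploit the fact that $\widetilde{P}(1)\phi$ has Fourier support in an annulus $\{|\xi|\sim 1\}$. Writing the propagator as an oscillatory integral and passing to polar coordinates in the frequency variable, the radial assumption reduces the spatial integration to a Bessel transform of order $\nu=(d-2)/2$,
\[
S(t)\widetilde{P}(1)\phi(x)=c_d\,|x|^{-\nu}\int_{0}^{\infty}e^{-it\rho^{2}}\chi_{1}(\rho)\widehat{\phi}(\rho)\,J_{\nu}(|x|\rho)\,\rho^{d/2}\,d\rho,
\]
where $\chi_{1}$ is a smooth cutoff to $\{\rho\sim 1\}$. Applying the large-argument expansion $J_{\nu}(z)=\sqrt{2/(\pi z)}\cos(z-\nu\pi/2-\pi/4)+O(z^{-3/2})$ and performing stationary phase in $\rho$ around the critical point $\rho^{\ast}=|x|/(2|t|)$, while treating $|x|\lesssim 1$ with the Bessel series expansion, I expect the sharpened dispersive bound
\[
\big\|S(t)\widetilde{P}(1)\phi\big\|_{L^{r}(\mathbb{R}^{d})}\le C_{d}(1+|t|)^{-(2d-1)(\tfrac{1}{2}-\tfrac{1}{r})}\|\phi\|_{L^{r'}(\mathbb{R}^{d})}
\]
for every radial $\phi$ and every $2\le r\le\infty$. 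The gain of $(d-1)(\tfrac{1}{2}-\tfrac{1}{r})$ over the classical decay reflects the fact that a radial function in $\mathbb{R}^{d}$, as far as restriction to the paraboloid is concerned, behaves like an arbitrary function in effective dimension $2d-1$.

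Next, I would feed this improved decay into the $TT^{\ast}$ machinery. Setting $T\phi:=S(\cdot)\widetilde{P}(1)\phi$, one has
\[
TT^{\ast}F(t,x)=\int_{\mathbb{R}} S(t-s)\widetilde{P}(1)^{2}F(s,\cdot)(x)\,ds,
\]
and the dispersive estimate above combined with Hardy--Littlewood--Sobolev in the $t$-variable yields $\|TT^{\ast}F\|_{L^{q}_{t}L^{r}_{x}}\le C_{d}\|F\|_{L^{q'}_{t}L^{r'}_{x}}$ precisely under the scaling condition $\tfrac{2}{q}\le(2d-1)(\tfrac{1}{2}-\tfrac{1}{r})$, i.e.\ $\tfrac{2}{q}+\tfrac{2d-1}{r}\le\tfrac{2d-1}{2}$, together with $q\ge 2$. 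The standard duality then produces $\|T\phi\|_{L^{q}_{t}L^{r}_{x}}\le C_{d}\|\phi\|_{L^{2}}$, which is the desired estimate.

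The main obstacle is the careful stationary-phase analysis underpinning the improved dispersive bound, in particular patching the large-$|x|\rho$ Bessel asymptotics to the transitional region where that expansion is inapplicable, and doing so with constants uniform in $t$. The excluded pair $(q,r)=(2,\tfrac{4d-2}{2d-3})$ is the radial analogue of the Keel--Tao endpoint; it genuinely fails for the above Hardy--Littlewood--Sobolev step and requires a separate Lorentz/bilinear argument, which is why it is left out of the statement.
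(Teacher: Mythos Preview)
The paper does not prove this theorem at all: it is stated as a known background result, with pointers to Guo--Wang \cite{GW} and to Ke \cite{K} for the ``simplest proof,'' together with the remark that the endpoint $(q,r)=(2,\tfrac{4d-2}{2d-3})$ is still open. So there is no in-paper argument to compare your proposal against; one can only ask whether your sketch would actually establish \eqref{Str_homo-rad}.

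Your overall plan---pass to polar coordinates, extract the Bessel factor, and run a $TT^{\ast}$/Hardy--Littlewood--Sobolev argument---is exactly the circle of ideas used in the references. The genuine gap is the intermediate ``improved dispersive'' inequality you write down,
\[
\big\|S(t)\widetilde{P}(1)\phi\big\|_{L^{r}}\le C_{d}(1+|t|)^{-(2d-1)(\tfrac{1}{2}-\tfrac{1}{r})}\|\phi\|_{L^{r'}}\qquad(2\le r\le\infty),
\]
which is false as stated. At $r=\infty$ it would force the frequency-localized kernel $K_{t}=(S(t)\widetilde{P}(1)\delta_{0})$ to obey $\|K_{t}\|_{L^{\infty}}\lesssim(1+|t|)^{-(2d-1)/2}$, whereas stationary phase on the annulus gives only $|t|^{-d/2}$ near $|x|\sim|t|$; the radial hypothesis on $\phi$ cannot help here because $K_{t}$ is already radial and the $L^{1}\!\to\!L^{\infty}$ bound is saturated by (approximate) Dirac data, which is radial. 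What the Bessel asymptotics genuinely buy is a \emph{spatial} gain $|x|^{-(d-1)/2}$ together with a one-dimensional oscillatory integral in $\rho$, not a faster-in-$t$ $L^{r'}\!\to\!L^{r}$ decay. The proofs in \cite{GW,K} therefore insert an additional layer---a dyadic decomposition in the physical radius $|x|$, or an equivalent reduction to a one-dimensional problem---between the Bessel step and the $TT^{\ast}$ step. Your sketch is missing precisely that layer; without it the HLS step only recovers the classical admissible line $\tfrac{2}{q}+\tfrac{d}{r}=\tfrac{d}{2}$, not the extended radial range \eqref{radial_pair}. Your remark on why the endpoint pair is excluded is, however, correct and matches the paper's comment.
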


There have been many studies on the Strichartz estimates for radial initial data.
For the history and some applications to nonlinear equations, we refer to Guo-Wang \cite{GW} and references therein. Also, for the simplest proof of \eqref{Str_homo-rad}, we refer to Ke \cite{K}.
Lastly, we point out that the endpoint estimate \eqref{Str_homo-rad} for $(q,r)=(2,\frac{4d-2}{2d-3})$ is still an open question.

Let $d\geq 2$ and $0<\tau \leq \widetilde{\tau}<1$. The range of pairs $(q,r)$ satisfying \eqref{radial_pair} is wider than that of admissible pairs $(q,r)$ fulfilling \eqref{admissible}.
This extended range will play a key role in proving Theorem \ref{main-thm3}.

Upon scaling, \eqref{Str_homo-rad} can rewritten as
    \begin{equation}\label{Str_homo-rad2}
    \big\| S(t) \widetilde{P}(\widetilde{\tau}) \phi \big\|_{L^q (\mathbb{R}; L^r (\mathbb{R}^d))}
    \leq C_{d} \widetilde{\tau}^{-\frac{1}{2} (\frac{d}{2}-\frac{d}{r}-\frac{2}{q})} \|\phi\|_{L^2 (\mathbb{R}^d)}
    \end{equation}
for all $(q,r)$ satisfying \eqref{radial_pair} and radial functions $\phi\in L^2 (\mathbb{R}^d)$.

Suppose  $1\le p<\frac{4}{d}$, and consider the admissible pair $(q_0,r_0)$, defined by \eqref{admisible_q0r0}.
Let $q_2$ be given by
    $$
    \frac{1}{q_2} = \frac{1}{q_0} +\epsilon_0,
    $$
where
    $$
    \epsilon_0 := \frac{4-dp}{8} >0.
    $$
Then we can easily check that $(q_2,r_0)$ satisfies \eqref{radial_pair}.

From the $TT^*$ argument and Christ-Kiselev lemma, \eqref{Str_homo} and \eqref{Str_homo-rad2} imply
    $$
    \begin{aligned}
    &\Big\| \int_{s < n \tau} S(n \tau -s) \widetilde{P}(\widetilde{\tau}) F(s) ds \Big\|_{\ell^{q_0} (\mathbb{R}; L^{r_0} )}
    + \Big\| \int_{s< t} S(t-s) \widetilde{P}(\widetilde{\tau}) F(s) ds \Big\|_{L^{q_0} (\mathbb{R}; L^{r_0})} \\
    &\qquad\qquad \leq C_{d} \widetilde{\tau}^{-\frac{1}{2} (\frac{d}{2}-\frac{d}{r_0}-\frac{2}{q_2})} \| F\|_{L^{q_2'} (\mathbb{R}; L^{r_0'} )}
    \end{aligned}
    $$
for all radial functions $F\in L^{q_2'}(\mathbb{R}; L^{r_0'}(\mathbb{R}^d))$ with respect to $x$ and $0<\tau\leq \widetilde{\tau}<1$.
Thus, from the triangle inequality and \eqref{part_unit}, we have
    \begin{equation}\label{Str_inhomo-rad2}
    \begin{aligned}
    &\Big\| \big(1-P(\widetilde{\tau})\big) \int_{s< t} S(t-s) F(s) ds \Big\|_{L^{q_0} (\mathbb{R}; L^{r_0})} \\
    &\qquad\leq \sum_{k=0}^{\infty} \Big\| \int_{s< t} S(t-s) \widetilde{P}(4^{-k}\widetilde{\tau}) F(s) ds \Big\|_{L^{q_0} (\mathbb{R}; L^{r_0})} \\
    &\qquad\leq \sum_{k=0}^{\infty} C_{d} (4^{-k}\widetilde{\tau})^{\epsilon_0} \| F\|_{L^{q_2'} (\mathbb{R}; L^{r_0'} )} \leq C_{d} \widetilde{\tau}^{\epsilon_0} \| F\|_{L^{q_2'} (\mathbb{R}; L^{r_0'} )}
    \end{aligned}
    \end{equation}
and
    \begin{equation}\label{Str_inhomo-rad2'}
    \begin{aligned}
    &\Big\| \big(P(\tau)-P(\widetilde{\tau})\big) \int_{s < n \tau} S(n \tau -s) F(s) ds \Big\|_{\ell^{q_0} (\mathbb{R}; L^{r_0})} \\
    &\qquad\leq \sum_{k=0}^{\log_4(\widetilde{\tau}/\tau)} \Big\| \int_{s < n \tau} S(n \tau -s) \widetilde{P}(4^{-k}\widetilde{\tau}) F(s) ds \Big\|_{\ell^{q_0} (\mathbb{R}; L^{r_0})} \\
    &\qquad\leq \sum_{k=0}^{\log_4(\widetilde{\tau}/\tau)} C_{d} (4^{-k}\widetilde{\tau})^{\epsilon_0} \| F\|_{L^{q_2'} (\mathbb{R}; L^{r_0'} )} \leq C_{d} \widetilde{\tau}^{\epsilon_0} \| F\|_{L^{q_2'} (\mathbb{R}; L^{r_0'} )}
    \end{aligned}
    \end{equation}
for all radial functions $F\in L^{q_2'}(\mathbb{R}; L^{r_0'}(\mathbb{R}^d))$ with respect to $x$.

\subsection{Proof of Theorem \ref{main-thm3}}
Let $2\leq d\leq 3$ and $0< p<\frac{4}{d}$. Assume that $\phi\in L^2(\mathbb{R}^d)$ is radially symmetric in $x$. Let $c_{d,p}>0$ be the constant from Section \ref{sec_global}, and we consider any $0<\tau<\min\big\{\frac{c_{d,p}}{2}\|\phi\|^{-\frac{4p}{4-dp}}_{L^2(\mathbb{R}^d)},1\big\}$ and $\tau \leq \widetilde{\tau} <1$.
    \footnote{
    The error estimate for the trivial case  $\tau\ge\min\big\{\frac{c_{d,p}}{2}\|\phi\|^{-\frac{4p}{4-dp}}_{L^2(\mathbb{R}^d)},1\big\}$ can be achieved from a similar argument with an alternative constant $\widetilde{c}_{d,p} \in (\frac{c_{d,p}}{4}, \frac{c_{d,p}}{2})$ such that $\tau / \big( \widetilde{c}_{d,p} \|\phi\|_{L^2(\mathbb{R}^d)}^{-\frac{4p}{4-dp}}\big) \in \mathbb{N}$ and $T_{loc} = \widetilde{c}_{d,p} \|\phi\|_{L^2(\mathbb{R}^d)}^{-\frac{4p}{4-dp}}$;
    we omit the details for this case.}

\underline{\textbf{Local estimate.}}
Let
    $$
    T_{loc}=\widetilde{c}_{d,p} \|\phi\|_{L^2(\mathbb{R}^d)}^{-\frac{4p}{4-dp}}\quad\mbox{and}\quad I_0= [0,T_{loc}],
    $$
where $\widetilde{c}_{d,p}>0$ is the constant from Section \ref{sec_global}.
Since the equation in \eqref{main-equation} is radially symmetric in $x$, so is the solution $u(t)\phi$ in $x$.
Thus, applying the Strichartz estimates \eqref{Str_homo}, \eqref{Str_inhomo-rad2} and \eqref{Str_inhomo-rad2'}, we have
    \begin{equation}\label{Str_inhomo-rad3}
    \begin{aligned}
    &\big\| u(t)\phi - P(\widetilde{\tau})u(t)\phi \big\|_{L^{q}(I_0; L^{r})} \\
    &\quad \leq \big\| S(t) (1- P(\widetilde{\tau}))\phi \big\|_{L^{q_0}(I_0; L^{r_0})}
        + \Big\| \big(1- P(\widetilde{\tau})\big) \int_{s < n \tau} S(n \tau -s) \big( |u(s)\phi |^p u(s)\phi \big) ds \Big\|_{L^{q_0} (I_0; L^{r_0})} \\
    &\quad \leq C_d \| \phi - P(\widetilde{\tau})\phi \|_{L^2(\mathbb{R}^d)}
    + C_d \widetilde{\tau}^{\epsilon_0} \big\| |u(t)\phi |^p u(t)\phi \big\|_{L^{q_2'} (I_0; L^{r_0'})} \\
    &\quad \leq C_d \| \phi - P(\widetilde{\tau})\phi \|_{L^2(\mathbb{R}^d)}
    + C_{d,p} \widetilde{\tau}^{\epsilon_0} T_{loc}^{\frac{1}{q_2'} - \frac{1}{q_0}} \| u(t)\phi \|_{L^{q_0} (I_0; L^{r_0})}^{p+1}
    \end{aligned}
    \end{equation}
and
    \begin{equation}\label{Str_inhomo-rad4}
    \begin{aligned}
    &\big\| P(\tau)u(n\tau)\phi - P(\widetilde{\tau})u(n\tau)\phi \big\|_{\ell^{q}(I_0; L^{r})} \\
    &\quad\leq C_{d} \| P(\tau)\phi - P(\widetilde{\tau})\phi \|_{L^2(\mathbb{R}^d)}
    + C_{d,p} \widetilde{\tau}^{\epsilon_0} T_{loc}^{\frac{1}{q_2'} - \frac{1}{q_0}} \big\| u(t)\phi \big\|_{L^{q_0} (I_0; L^{r_0})}^{p+1}
    \end{aligned}
    \end{equation}
for any admissible pair $(q,r)$.
From \eqref{NLS-bdd} and $T_{loc}=\widetilde{c}_{d,p} \|\phi\|_{L^2}^{-\frac{4p}{4-dp}}$, \eqref{Str_inhomo-rad3} and \eqref{Str_inhomo-rad4} give
    \begin{equation}\label{Str_inhomo-rad5}
    \begin{aligned}
    &\big\| u(t)\phi - P(\widetilde{\tau})u(t)\phi \big\|_{L^{q_0}(I_0; L^{r_0})}
    + \big\| P(\tau)u(n\tau)\phi - P(\widetilde{\tau})u(n\tau)\phi \big\|_{\ell^{q_0}(I_0; L^{r_0})} \\
    &\qquad \leq C_{d} \Big( \| \phi - P(\widetilde{\tau})\phi \|_{L^2(\mathbb{R}^d)} + \| \phi - P(\tau)\phi \|_{L^2(\mathbb{R}^d)} \Big)
    + C_{d,p} \widetilde{\tau}^{\frac{4-dp}{8}} T_{loc}^{\frac{4-dp}{8}} \| \phi \|_{L^2(\mathbb{R}^d)}^{p+1} \\
    &\qquad \leq 2C_{d} \| \phi - P(\widetilde{\tau})\phi \|_{L^2(\mathbb{R}^d)}
    + C_{d,p} \widetilde{\tau}^{\frac{4-dp}{8}} \| \phi \|_{L^2(\mathbb{R}^d)}^{\frac{p+2}{2}}.
    \end{aligned}
    \end{equation}
Now, applying \eqref{Str_inhomo-rad5} to Proposition \ref{local_main_thm}, we get the following local-in-time estimate:
    \begin{equation}\label{key_estimate_rad}
    \begin{aligned}
    &\big\| Z_{flt}(n\tau)\phi - P(\tau)u(n\tau)\phi \big\|_{\ell^{q}(I_0; L^r)} \\
    &\quad\leq 2C_{d} \| \phi - P(\widetilde{\tau})\phi \|_{L^2(\mathbb{R}^d)}
    + C_{d,p} \widetilde{\tau}^{\frac{4-dp}{8}} \| \phi \|_{L^2(\mathbb{R}^d)}^{\frac{p+2}{2}} + \Big(\frac{\tau}{\widetilde{\tau}}\Big)^{\frac{1}{2}} \big( \|\phi\|_{L^2(\mathbb{R}^d)} +\|\phi\|_{L^2(\mathbb{R}^d)}^{p+1}\big)
    \end{aligned}
    \end{equation}
for any $0<\tau<\min\big\{\frac{c_{d,p}}{2}\|\phi\|^{-\frac{4p}{4-dp}}_{L^2(\mathbb{R}^d)},1\big\}$, $\tau \leq \widetilde{\tau} <1$ and admissible pair $(q,r)$.

\
\underline{\textbf{Global estimate.}}
We now prove the global-in-time convergence estimate \eqref{main-result5}.
This proof is the same as that of Theorem \ref{main-thm}.
Fix any $T>0.$ Since the case that $T\le T_{loc}$ follows from the local-in-time estimate above, we assume $T> T_{loc}$. Let $k_0\in\mathbb{N}$ be such that $k_0 T_{loc}<T\le (k_0+1) T_{loc}$, and take $I_k := [kT_{loc}, (k+1)T_{loc}]$ ($k=0,1,\cdots,k_0$) as in Section \ref{sec_global}.
Applying \eqref{key_estimate_rad} to \eqref{final_bdd3},
then \eqref{final_bdd2} is bounded by
    \begin{equation}\label{final_bdd_rad}
    \begin{aligned}
    &\big\|Z_{flt}(n \tau) \phi - P(\tau)u(n \tau)\phi \big\|_{\ell^{\infty}(I_k; L^2)} \\
    &\quad\leq C_d \big\| Z_{flt}(n\tau)\phi - P(\tau)u(n\tau)\phi \big\|_{\ell^{\infty}(I_{k-1}; L^2)} \\
    &\qquad\quad + C_{d} \big\| u(n_k\tau)\phi - P(\widetilde{\tau})u(n_k\tau)\phi \big\|_{L^2(\mathbb{R}^d)} \\
    &\qquad\quad + C_{d,p}\widetilde{\tau}^{\frac{4-dp}{8}} \| u(n_k\tau)\phi \|_{L^2(\mathbb{R}^d)}^{\frac{p+2}{2}} + \Big(\frac{\tau}{\widetilde{\tau}}\Big)^{\frac{1}{2}} \big( \| u(n_k\tau)\phi\|_{L^2(\mathbb{R}^d)} +\| u(n_k\tau)\phi\|_{L^2(\mathbb{R}^d)}^{p+1}\big)  \\
    &\quad\leq C_d \big\| Z_{flt}(n\tau)\phi - P(\tau)u(n\tau)\phi \big\|_{\ell^{\infty}(I_{k-1}; L^2)}
    + \mathcal{B}_2
    \end{aligned}
    \end{equation}
for any $0\le k\le k_0$, $0<\tau<\min\big\{\frac{c_{d,p}}{2}\|\phi\|^{-\frac{4p}{4-dp}}_{L^2(\mathbb{R}^d)},1\big\}$ and $\tau \leq \widetilde{\tau} <1$, where $I_{-1}:=\emptyset$.
In the above, we define $\mathcal{B}_2$ as
    $$
    \begin{aligned}
    \mathcal{B}_2
    := C_d \big\| \big( 1 - P(\widetilde{\tau}) \big) u(t)\phi \big\|_{L^{\infty}([0,T]; L^{2})}
    + C_{d,p}\widetilde{\tau}^{\frac{4-dp}{8}} \| \phi \|_{L^2(\mathbb{R}^d)}^{\frac{p+2}{2}} + \Big(\frac{\tau}{\widetilde{\tau}}\Big)^{\frac{1}{2}} \big( \|\phi\|_{L^2(\mathbb{R}^d)} +\|\phi\|_{L^2(\mathbb{R}^d)}^{p+1}\big).
    \end{aligned}
    $$
Using the induction argument with \eqref{final_bdd_rad},
we have
    $$
    \begin{aligned}
    \max_{k=0,\cdots,k_0} \big\| Z_{flt}(n \tau)\phi - P(\tau)u(n\tau)\phi \big\|_{\ell^{\infty}(I_k; L^2)}
    \leq \sum_{k=0}^{k_0} (1+C_d)^k \mathcal{B}_2
    \leq C_d \exp\big( C_{d,p}T \| \phi\|_{L^2(\mathbb{R}^d)}^{\frac{4p}{4-dp}} \big) \mathcal{B}_2 .
    \end{aligned}
    $$
Applying this inequality to \eqref{global_start}, we have
\footnote{From $0<\tau \leq \widetilde{\tau}$, we get
$\| (1-P(\tau))u(t)\phi \big\|_{L^2(\mathbb{R}^d)} \leq \| (1-P(\widetilde{\tau}))u(t)\phi \big\|_{L^2(\mathbb{R}^d)}$.}
    \begin{equation}\label{global_start_rad}
    \begin{aligned}
    &\max_{0\leq n\tau\leq T} \big\| Z_{flt}(n \tau)\phi - u(n\tau)\phi \big\|_{L^2(\mathbb{R}^d)} \\
    &\qquad\leq \big\| u(t)\phi - P(\tau)u(t)\phi \big\|_{L^{\infty}([0,T]; L^2)}
    + C_d \exp\big( C_{d,p}T \| \phi\|_{L^2(\mathbb{R}^d)}^{\frac{4p}{4-dp}} \big) \mathcal{B}_2 \\
    &\qquad\leq (1+C_d) \exp\big( C_{d,p}T \| \phi\|_{L^2(\mathbb{R}^d)}^{\frac{4p}{4-dp}} \big) \mathcal{B}_2 .
    \end{aligned}
    \end{equation}
Furthermore, we can control the first term of $\mathcal{B}_2$ by the initial data $\phi$.
\eqref{Str_inhomo-rad3} gives
    \begin{equation}
    \begin{aligned}
    &\big\| u(t)\phi - P(\widetilde{\tau})u(t)\phi \big\|_{L^{\infty}(I_k; L^{2})}
    = \big\| u(t)u(n_k\tau)\phi - P(\widetilde{\tau})u(t)u(n_k\tau)\phi \big\|_{L^{\infty}(I_0; L^{2})} \\
    &\qquad\leq C_d \| u(n_k\tau)\phi - P(\widetilde{\tau})u(n_k\tau)\phi \|_{L^2(\mathbb{R}^d)}
    + C_{d,p} \widetilde{\tau}^{\frac{4-dp}{8}} T_{loc}^{\frac{4-dp}{8}} \| u(t)u(n_k\tau)\phi \|_{L^{q_0} (I_0; L^{r_0})}^{p+1} \\
    &\qquad\leq C_d \| u(t)\phi - P(\widetilde{\tau})u(t)\phi \|_{L^{\infty}(I_{k-1}; L^{2})}
    + C_{d,p} \widetilde{\tau}^{\frac{4-dp}{8}} \|\phi\|_{L^{2}(\mathbb{R}^d)}^{-\frac{p}{2}} \| u(t)\phi \|_{L^{q_0} ([0,\widetilde{T}]; L^{r_0})}^{p+1}
    \end{aligned}
    \end{equation}
and
    \begin{equation}
    \begin{aligned}
    \big\| u(t)\phi - P(\widetilde{\tau})u(t)\phi \big\|_{L^{\infty}(I_0; L^{2})}
    \leq C_d \| \phi - P(\widetilde{\tau})\phi \|_{L^{2}(\mathbb{R}^d)}
    + C_{d,p} \widetilde{\tau}^{\frac{4-dp}{8}} \|\phi\|_{L^{2}(\mathbb{R}^d)}^{-\frac{p}{2}} \| u(t)\phi \|_{L^{q_0} ([0,\widetilde{T}]; L^{r_0})}^{p+1}.
    \end{aligned}
    \end{equation}
This implies
    \begin{equation}\label{global_start_rad2}
    \begin{aligned}
    &\big\| u(t)\phi - P(\widetilde{\tau})u(t)\phi \big\|_{L^{\infty}([0,T]; L^{2})} \\
    &\qquad\leq \max_{k=0,\cdots,k_0} \big\| u(t)\phi - P(\widetilde{\tau})u(t)\phi \big\|_{L^{\infty}(I_k; L^{2})} \\
    &\qquad\leq (1+C_d)^{k_0} \Big( \| \phi - P(\widetilde{\tau})\phi \|_{L^{2}(\mathbb{R}^d)} + C_{d,p} \widetilde{\tau}^{\frac{4-dp}{8}} \|\phi\|_{L^{2}(\mathbb{R}^d)}^{-\frac{p}{2}} \| u(t)\phi \|_{L^{q_0} ([0,\widetilde{T}]; L^{r_0})}^{p+1} \Big) \\
    &\qquad\leq C_{d,p} \exp\big( C_{d,p}T \| \phi\|_{L^2(\mathbb{R}^d)}^{\frac{4p}{4-dp}} \big) \Big( \| \phi - P(\widetilde{\tau})\phi \|_{L^{2}(\mathbb{R}^d)} + \widetilde{\tau}^{\frac{4-dp}{8}} T\|\phi \|_{L^{2}(\mathbb{R}^d)}^{C_{d,p}} \Big)
    \end{aligned}
    \end{equation}
from the global well-posedness \eqref{NLS_global_bdd}.
Applying \eqref{global_start_rad2} to \eqref{global_start_rad},
we have
    \begin{equation}\label{global_start_rad3}
    \begin{aligned}
    &\max_{0\leq n\tau\leq T} \big\| Z_{flt}(n \tau)\phi - u(n\tau)\phi \big\|_{L^2(\mathbb{R}^d)} \\
    &\quad\leq C_{d,p}\exp\big( C_{d,p}T \| \phi\|_{L^2(\mathbb{R}^d)}^{\frac{4p}{4-dp}} \big) \\
    &\qquad \times\Big( \| \phi - P(\widetilde{\tau})\phi \|_{L^{2}(\mathbb{R}^d)} + \widetilde{\tau}^{\frac{4-dp}{8}} T\|\phi \|_{L^{2}(\mathbb{R}^d)}^{C_{d,p}} + \Big(\frac{\tau}{\widetilde{\tau}}\Big)^{\frac{1}{2}} \big( \|\phi\|_{L^2(\mathbb{R}^d)} +\|\phi\|_{L^2(\mathbb{R}^d)}^{p+1}\big) \Big)
    \end{aligned}
    \end{equation}
for any $0<\tau<\min\big\{\frac{c_{d,p}}{2}\|\phi\|^{-\frac{4p}{4-dp}}_{L^2(\mathbb{R}^d)},1\big\}$ and $\tau\leq \widetilde{\tau}<1$.

We omit the trivial case  $\tau\ge\min\big\{\frac{c_{d,p}}{2}\|\phi\|^{-\frac{4p}{4-dp}}_{L^2(\mathbb{R}^d)},1\big\}$.
Thus the convergence estimate \eqref{main-result5} is finally established.

\section{Appendix}\label{sec_Appen}

In this section, we give a proof of \eqref{NLS-bdd} for readers' convenience.
The argument is almost the same as the standard one.

\begin{proof}[Proof of \eqref{NLS-bdd}]
The argument of this proof is a minor modification of the proof of the time-continuous norm $\|u \|_{L^q(I_0; L^{r} )}$.
We give a proof briefly.
Let
    $$
    T_{loc}=c_{d,p} \|\phi\|_{L^2(\mathbb{R}^d)}^{-\frac{4p}{4-dp}}\quad\mbox{and}\quad I_0= [0,T_{loc}],
    $$
where $c_{d,p}>0$ is a constant to be determined below.
By Duhamel's principle \eqref{Duhamel_u} and the Strichartz estimates \eqref{Str_homo} and \eqref{Str_inhomo1}, we get
    $$
    \big\| P(\tau)u(n\tau)\phi \big\|_{\ell^q(I_0; L^{r} )}
    \leq C_d \| \phi \|_{L^2(\mathbb{R}^d)}
    + C_d \big\| |u|^p u \big\|_{L^{q_0'}(I_0; L^{r_0'} )},
    $$
where $(q_0,r_0)$ is defined as in \eqref{admisible_q0r0}.
By H\"older's inequality, the second term is bounded by
    $$
    \big\| |u|^p u \big\|_{L^{q_0'}(I_0; L^{r_0'})}
    \leq C_{d,p} T_{loc}^{1-\frac{dp}{4}} \big\| |u|^p \big\|_{L^{\frac{q_0}{p}}(I_0; L^{\frac{r_0}{p}})}
    \| u \|_{L^{q_0}(I_0; L^{r_0})}.
    $$
From this estimate and the result \eqref{NLS-bdd} for the time-continuous norm $\|u \|_{L^q(I_0; L^{r} )}$, we can take a small constant $c_{d,p}>0$ in $T_{loc}$ such that
    \begin{equation}\label{Apendix_1}
    \begin{aligned}
    \big\| P(\tau)u(n\tau)\phi \big\|_{\ell^q(I_0; L^{r} )}
    &\leq C_d \| \phi \|_{L^2(\mathbb{R}^d)}
    + C_{d,p} T_{loc}^{1-\frac{dp}{4}} \| u \|_{L^{q_0}(I_0; L^{r_0})}^{p+1} \\
    &\leq C_d \| \phi \|_{L^2(\mathbb{R}^d)}
    + \| \phi \|_{L^2(\mathbb{R}^d)}
    \end{aligned}
    \end{equation}
for all admissible pair $(q,r)$.
This implies \eqref{NLS-bdd} for the time-discrete norm $\big\| P(\tau)u(n\tau)\phi \big\|_{\ell^q(I_0; L^{r} )}$.
\end{proof}

%%%%%%%%%%%%%%%%%%%%%%%%%%%%%%%%%%%%%%%%%%%%%%%%%%%%%%%%%%%%%%%%%%%%%%%%%%%%%%%%%%%%%%%%%%%%

\section*{Acknowledgments}
H. J. Choi was supported by the National Research Foundation of Korea (grant RS-2023-00280065) and Education and Research Promotion program of KOREATECH in 2022.
S. Kim was supported by the National Research Foundation of Korea (grant NRF-2022R1F1A1063379, RS-2023-00217116) and Korea Institute for Advanced Study(KIAS) grant funded by the Korea government (MSIP).
Y. Koh was supported by the the research grant of the Kongju National University in 2023 and the Visiting Professorship at Korea Institute for Advanced Study(KIAS).

%%%%%%%%%%%%%%%%%%%%%%%%%%%%%%%%%%%%%%%%%%%%%%%%%%%%%%%%%%%%%%%%%%%%%%%%%%%%%%%%%%%%%%%%%%%%%%%%%%%%%

\end{document}